\title{Uniform tightness for time-inhomogeneous particle systems and for conditional distributions of time-inhomogeneous diffusion processes}
\author{Denis Villemonais
\thanks{TOSCA project-team, INRIA Nancy -- Grand Est, IECN -- UMR 7502, Universit\'e Henri Poincar\'e Nancy 1
B.P. 70239, 54506 Vandoeuvre-l\`es-Nancy Cedex, France }
}
\DeclareMathSymbol{\minus}{\mathord}{operators}{"2D}
\newtheorem{theorem}{Theorem}
\newtheorem{lemme}[theorem]{Lemma}
\newtheorem{hypothesis}{Hypothesis}
\def\be{\begin{eqnarray}}
\def\ee{\end{eqnarray}}
\def\ben{\begin{eqnarray*}}
\def\een{\end{eqnarray*}}
\def\bei{\begin{itemize}}
\def\eei{\end{itemize}}
\def\me{\medskip \noindent}
\def\bi{\bigskip \noindent}
\def\E{\mathbb{E}}
\def\P{\mathbb{P}}
\def\R{\mathbb{R}}
\def\1{\mathbf{1}}
\def\d{\partial}
\begin{document}
\maketitle
\begin{abstract}
In this article, we consider time-inhomogeneous diffusive particle
systems, whose particles jump from the boundary of a bounded open
subset of $\R^d$, $d\geq 1$. We give a sufficient criterion for the
family of empirical distributions of such systems to be uniformly
tight, independently of the jump location of the particles. As an
application, we show that the conditional distribution of a family of
time-inhomogeneous and environment-dependent diffusions conditioned
not to hit the boundary of a bounded open subset of $\R^d$ is
uniformly tight.
\end{abstract}

\emph{Keywords:} Particle systems, random measures, conditional distributions, uniform tightness

\emph{MSC 2000 subject :} Primary 82C22, 65C50, 60K35; Secondary: 60J60

\section{Introduction}

Particle systems have received a lot of interest in the past decades
and have proved to be useful in several domains, as rare
events simulation and conditional distribution approximations.
In this paper, we consider a large class of particle systems
with fixed size $N\geq 2$, whose particles evolve as independent
diffusion processes in a given bounded open subset $D$ of $\mathbb{R}^d$
and jump when they hit the boundary $\partial D$ or after some exponential clocks. The distribution of the jump location depends on the whole system of particles, so that the particles are interacting with each others. The particles of such system evolve in the open set $D$, thus their empirical distribution is a random probability measure on $D$. In this paper, we are concerned with the uniform tightness of the law of the empirical distributions of particle systems with jump from a boundary.

\me
Such general interacting particle systems with jump from the boundary are directly inspired by the Fleming-Viot type system introduced by Burdzy, Holyst, Ingerman and March~\cite{Burdzy1996} in 1996. In
this paper, the authors describe a particle system whose particles
evolve as independent Brownian motions until one of them reaches the
boundary of a fixed open subset $D$ of $\R^d$; at this time, the hitting
particle instantaneously jumps to the position of an other particle;
then the particles evolve as independent Brownian motions, until one
particle hits the boundary  $\partial D$ and so on. They suggested that
the empirical distribution of the process was closely related to the
distribution of a Brownian motion conditioned not to hit the boundary
of $D$ and that the long-time behavior of its empirical distribution
was an approximation of the so-called \textit{quasi-stationary distribution} of
the Brownian motion absorbed at $\d D$.

\me
Two questions naturally arise in the study of interacting particle systems with jump from the boundary. The first one (and probably the most challenging one) concerns the non-explosion of the number of jumps in finite time. This question has been studied by Burdzy,
Ho{\l}yst and March~\cite{Burdzy2000}, by L\"obus~\cite{Lobus2009} and
by Bieniek, Burdzy and Finch~\cite{Bieniek2009}. Different dynamics
than the Brownian one have also been studied by Grigorescu and
Kang~\cite{Grigorescu2011}, where diffusive particles with
$C^{\infty}$ coefficients are considered, and by
Villemonais~\cite{Villemonais2010}, \cite{Villemonais2011} (where
time-inhomogeneous and environment-dependent diffusive particles are
considered). Let us emphasize that, in the last three articles, different jump mechanisms are
also allowed: each hitting particle can jump from the boundary to a position which follows a distribution
that depends on the position of the other particles (and on time and on the
environment for the last paper) in a general way, which is also the setting of the present paper. Concerning the non-explosion of the number of jumps, it is
remarkable that non-trivial counter-example (cases with explosion of the number of jumps in finite time) remained unkown up to the recent
results of Bieniek, Burdzy and Pal~\cite{Bieniek2011}.

\me
The second challenging question which naturally arises in this setting is: when the particle system is well defined (that is when the number of jumps doesn't explode), do the particles degenerate to the boundary in the long term? These question has already been treated in the case of the Fleming-Viot type particle system described above when the particles evolve as Brownian motions~\cite{Burdzy2000} or Brownian motions with drift~\cite{Villemonais2010}. However, the question remains open for more general jump mechanisms and different dynamics.
In the present paper, we answer this question by providing a criterion ensuring that the laws of the empirical distribution of a general particle system are uniformly tight.
This criterion is directly derived from the non-explosion criterion provided in~\cite{Villemonais2011}, which allows various choices of 
the dynamics and jump mechanism of the particles.

\me
As it has been suggested in~\cite{Burdzy1996}, the  particular case of the Fleming-Viot type particle systems (with jump from the boundary to the position of another particle) provides an approximation method for the conditional distribution of diffusion processes conditioned not to hit a boundary. This property, which has been proved 
by Grigorescu and Kang
in~\cite{Grigorescu2004} for Brownian particles, by
Villemonais~\cite{Villemonais2010} for Brownian particles with
drift, has been proved in all generality in~\cite{Villemonais2011}
for general Markov processes. Numerical implementations of this method have been used in~\cite{Villemonais2010} to provide numerical approximation of quasi-stationary distributions and in~\cite{Meleard2011} to compute the distance between the quasi-stationary distribution of a process and its conditional distribution. In the present paper, we use this convergence result to obtain theoretical informations on the distribution of diffusion processes conditioned not to reach the boundary of an open subset $D$ of $\mathbb{R}^d$. More precisely, we derive the uniform tightness of the distribution of such diffusion processes from the uniform tightness of the empirical distributions of a sequence of approximating particle systems.

\me The paper is divided into three parts. In Section~\ref{section:definition-particle-system}, we describe precisely the dynamic of the particle systems we are interested in. In particular, we recall a criterion taken from~\cite{Villemonais2011} which ensures that the particle systems are well defined (\textit{i.e.} that the number of jumps of the system remains bounded in finite time almost surely). In Section~\ref{section:uniform-tightness-empirical}, we state and prove our main result, providing a criterion which ensures that the laws of the empirical distributions of the particle systems are uniformly tight. In Section~\ref{section:uniform-tightness-distribution}, we apply this result to the Fleming-Viot type process introduced in~\cite{Burdzy1996}, which allows us to derive the uniform tightness of the distribution of diffusion processes conditioned not to reach the boundary of a bounded open subset of $\mathbb{R}^d$.

\section{Definition of the particle systems}
\label{section:definition-particle-system}
For each $N\geq 2$, we define an $N$-particles system whose particles
evolve as independent time-inhomogeneous environment-dependent
diffusion processes between their jumps. In a first time we define the diffusion processes which will drive the particles between the jumps; in a
second time, we define the jump measures that will give the jump
positions of the particles. Finally, we recall a recent result~\cite{Villemonais2011} which ensures that the particle system is well
define for any time $t\geq 0$.

\me 
Let $E$ and $D$ be two bounded open subsets of $\mathbb{R}^{d}$
and $\mathbb{R}^{d'}$ respectively. For each $N\geq 2$, let ${\cal Z}^{1,N},...,{\cal Z}^{N,N}$ be a
family of $N$ strong Markov processes, each of them being equal to a
$3$-tuple $(t,e^{i,N}_t,Z^{i,N}_t)_{t\in[0,\tau_{\partial}[}$ which
    evolves in $\mathbb{R}_+\times E \times D$ as a time-inhomogeneous environment-dependent diffusion process.  In the
    $3$-tuple $(t,e^{i,N}_t,Z^{i,N}_t)$, the parameter $t$ denotes the
    time, $e^{i,N}_t\in E$ denotes the state of the environment and
    $Z^{i,N}_t\in D$ denotes the actual position of the diffusion.  Each diffusion
 process ${\cal Z}^{i,N}$ will be used to define the dynamic of the
 $i^{th}$ particle of the system between its jumps.
By a \textit{time-inhomogeneous environment-dependent diffusion
  process}, we mean that, for any $N \geq 2$ and any
$i\in\{1,...,N\}$, there exist four measurable functions
 \begin{equation*}
   \begin{split}
     &s^N_i:[0,+\infty[\times E\times D\mapsto
     \mathbb{R}^{d}\times\mathbb{R}^{d}\\
     &m^N_i:[0,+\infty[\times E\times D\mapsto
     \mathbb{R}^{d}\\
     &\sigma^N_i:[0,+\infty[\times E\times D\mapsto
     \mathbb{R}^{d'}\times\mathbb{R}^{d'}\\
     &\eta^N_i:[0,+\infty[\times E\times D\mapsto \mathbb{R}^{d'},
  \end{split}
 \end{equation*}
 such that ${\cal Z}^{i,N}=(.,e^{i,N},Z^{i,N})$ is solution to the
 stochastic differential system
 \begin{equation*}
   \begin{split}
     de^{i,N}_t&=s^N_i(t,e^{i,N}_t,Z^{i,N}_t)d\beta^{i,N}_t+m_i(t,e^{i,N}_t,Z^{i,N}_t)dt,\ \quad e^{i,N}_0\in E,\\
     dZ^{i,N}_t&=\sigma^N_i(t,e^{i,N}_{t},Z^i_{t})dB^{i,N}_t+\eta^{i,N}({t},e^{i,N}_{t},Z^{i,N}_{t})
     dt,\ \quad Z^{i,N}_0\in D,
   \end{split}
 \end{equation*}
 where the $(\beta^{i,N},B^{i,N})$ are independent standard $d+d'$ Brownian
 motions.  Each process ${\cal
   Z}^{i,N}$ is killed when $Z^{i,N}_t$ hits $\partial D$ (\textit{hard killing}) and with a rate of killing
 $\kappa^N_i(t,e^{i,N}_t,Z^{i,N}_t)\geq 0$ (\textit{soft killing}), where
 \begin{equation*}
   \kappa^N_i:[0,+\infty[\times E\times D\mapsto \mathbb{R}_+
 \end{equation*}
 is a uniformly bounded measurable function.  We emphasize that each
 process ${\cal Z}^{i,N}$ evolves in the same state space
 $\mathbb{R}_+\times E \times D$, for any $i,N$.

 \bi Let us now define the jump measures, given by two measurable functions
 \begin{equation*}
   {\cal
   S}^{N}:[0,+\infty[\times E^N \times D^N\rightarrow {\cal
       M}_1(E^N\times D^N)
 \end{equation*}
 and
 \begin{equation*}
   {\cal H}^N:[0,+\infty[\times
         E^N \times \partial (D^N)\rightarrow {\cal
           M}_1(E^N\times D^N),
 \end{equation*}
 where ${\cal M}_1(F)$ denotes the space of probability measures on $F$, for any set $F$.
 
\medskip \noindent The position of the particle system at time $t$ will be denoted by $(t,\mathbb{O}^N_t,\mathbb{X}^N_t)$, where $t$ is the time, $\mathbb{O}^N_t=(o^{1,N}_t,\cdots,o^{N,N}_t)\in E^N$ is the vector of environments and $\mathbb{X}^N_t=(X^{1,N}_t,\cdots,X^{N,N}_t)\in D^N$ is the vector of positions of the particles. In particular, the $i^{th}$ particle at time $t\geq 0$ is given by $(t,o^{i,N}_t,X^{i,N}_t)$. We are now able to describe precisely the dynamic of our particle system.
 
 \bigskip \noindent \textbf{Dynamic of the particle system }
 The particles of the
 system evolve
 as independent copies of ${\cal Z}^{i,N}$, $i=1,...,N$, until one of
 them is killed. If the killed particle is softly killed, then the whole system
 jumps instantaneously with respect to the jump measure ${\cal
   S}^{N}(t,\mathbb{O}^N_t,\mathbb{X}^N_t)$; if the killed particle is hardly
 killed, it jumps with respect to ${\cal
   H}^N(t,\mathbb{O}^N_t,\mathbb{X}^N_t)$. Then the particles evolve
 as independent copies of ${\cal Z}^{i,N}$, $i=1,...,N$, until one of
 them is killed and so on.

 \bi We denote the successive jump times of the particle system by
 \begin{equation*}
   \tau^N_1 < \tau^N_2 < ... <\tau^N_n <...,
 \end{equation*}
 and we set $\tau^N_{\infty}=\lim_{n\rightarrow\infty} \tau^N_n$. It is clear that the $N$-particles system is well defined at
 any time $t$ such that $t<\tau^N_{\infty}$, but there is no natural way
 to extend the definition of the particle system beyond the explosion time
 $\tau^N_{\infty}$. As a consequence, it is of first important to be able to decide
 whether the number of jumps of the process explodes in finite time or
 not, that is to decide wether $\P(\tau^N_{\infty}=+\infty)<1$ or
 $=1$. The two following hypotheses ensure that $\P(\tau^N_{\infty}=+\infty)=1$ holds for any $N\geq 2$.
  As mentioned in the introduction, different criterions ensuring this non-explosion property exist, but they
 do not allow time-inhomogeneous (and
 \textit{a fortiori} environment-dependent) diffusive particles; moreover, they are far more restrictive in the choice of the jump measures. 
 
\medskip \noindent The first assumption concerns the processes ${\cal Z}^{i,N}$, $i=1,\ldots,N$, which give the dynamics of the particles between the jumps. We denote by $\phi_D$ the Euclidean distance to
 the boundary $\partial D$, defined for all $x\in \mathbb{R}^{d'}$ by
 \begin{equation*}
   \phi_D(x)=\inf_{y\in \partial D} \|x-y\|_2,
 \end{equation*}
 where $\|.\|_2$ denotes the Euclidean norm of $\mathbb{R}^{d'}$. For any $a>0$, the boundary's neighbourhood $D^a$ of $\partial D$ is defined by
 $$
 	D^a=\{x\in D,\ \phi_D(x)<a\}.
 $$

\begin{hypothesis}
  \label{hypothesis:particle-dynamics}
  We assume that there exist five positive constants $a_0$, $A$, $k_g$, $c_{0}$ and $C_{0}$ such that
  \begin{enumerate}
  \item $\phi_D$ is of class $C^2$ on $D^{a_0}$, with uniformly bounded derivatives,
  \item for any $N\geq 2$, $\kappa_{i}^N$ is uniformly bounded by $A$ on $[0,+\infty[\times E
      \times D$ and $s^{N}_i,\sigma^{N}_i,m^{N}_i$ and $\eta^{N}_i$
      are uniformly bounded by $A$ on $[0,+\infty[\times E \times D^{a_0}$,
      \item for any $N\geq 2$ and any  $i\in\{1,...,N\}$, there exist two
    measurable functions $f^{N}_i:[0,+\infty[\times E\times D^{a_0}\rightarrow
    \mathbb{R}_+$ and $g^{N}_i:[0,+\infty[\times E\times D^{a_0}\rightarrow
    \mathbb{R}$ such that $\forall (t,e,z)\in [0,+\infty[\times
    E\times D^{a_0}$,
    \begin{equation}
      \label{chapitre4:EqHyThExDriftTerm}
      \sum_{k,l}\frac{\partial \phi_D}{\partial x_k}(z) \frac{\partial
        \phi_D}{\partial x_l}(z)
      [\sigma^{N}_i\sigma^{N*}_i]_{kl}(t,e,z)=f^{N}_i(t,e,z)+g^{N}_i(t,e,z),
    \end{equation}
    and such that
    \begin{enumerate}
    \item $f^{N}_i$ is of class $C^1$ in time and of class $C^2$ in
      environment/space, and the derivatives of $f^{N}_i$ are
      uniformly bounded by $A$ in $[0,+\infty[\times E\times D^{a_0}$.
    \item for all
      $(t,e,z)\in[0,+\infty[\times E \times D^{a_0}$,
      \begin{equation*}
	|g^{N}_i(t,e,z)|\leq k_g\phi_D(z),
      \end{equation*}
    \item for all $(t,e,z)\in[0,+\infty[\times E \times D^{a_0}$,
      \begin{equation*}
        c_{0}<f^{N}_i(t,e,z)<C_{0}\text{ and }c_{0}<f^{N}_i(t,e,z)+g^{N}_i(t,e,z)<C_{0}.
      \end{equation*}
    \end{enumerate}
  \end{enumerate}
\end{hypothesis}

\medskip \noindent The second assumption below concerns the jump measures ${\cal H}^N$ and ${\cal S}^N$. Let us first remark that, when the system hits the boundary $\partial(D^N)$, at most one particle hits $\partial D$. This implies that the whole set of particles hits one and only one of the sets ${\cal D}^N_i$, $i=1,\cdots,N$, defined by
\begin{equation*}
{\cal D}_i=\left\lbrace (x_1,\cdots,x_N)\in\partial (D^N),\ x_i\in \partial D\ \mbox{and}\ x_j\in D,\forall j\neq i \right\rbrace.
\end{equation*}
With this definition, it is clear that the $i^{th}$ particle is hardly killed if and only if $(\cdot,\mathbb{O}^N,\mathbb{X}^N)$ hits $[0,+\infty[\times E^N\times {\cal D}^N_i$. In particular, the behavior of ${\cal H}^N$ outside these sets does not present any interest.

 \begin{hypothesis}
 \label{hypothesis:jump-measure}
 We assume that, for any $N\geq 2$,
     \begin{enumerate}
  \item There exists a non-decreasing continuous function
    $h^N:\mathbb{R}_+\rightarrow\mathbb{R}_+$ vanishing only at $0$ such that,
    $\forall i\in\{1,...,N\}$,
    \begin{equation*}
      \inf_{(t,e,(x_1,...,x_N))\in[0,+\infty[\times E^N\times {\cal D}^N_i}{\cal H}^N(t,e,x_1,...,x_N)(E^N\times A^N_{i})\geq p^N_0,
    \end{equation*}
    where $p^N_0>0$ is a positive constant and $A^N_{i}\subset D^N$ is the
    set defined by
    \begin{equation*}
      A^N_{i}=\left\lbrace (y_1,...,y_N)\in D^N\,|\,\exists j\neq i\text{ such that }\phi_{D}(y_{i})\geq h^N(\phi_D(y_j)) \right\rbrace.
    \end{equation*}
    
  \item We have
    \begin{equation*}
      \inf_{(t,e,(x_1,...,x_N))\in[0,+\infty[\times E\times {\cal D}_i}{\cal H}(t,e,x_1,...,x_N)(E\times B_{x_1,...,x_n})=1,
    \end{equation*}
    where
    \begin{equation*}
      B_{x_1,...,x_n}=\left\lbrace (y_1,...,y_N)\in D^N\,|\,\forall i,\  \phi_{i}(y_{i})\geq \phi_i(x_i) \right\rbrace
    \end{equation*}
  \end{enumerate}
 \end{hypothesis}
 
\medskip \noindent Let us explain the meaning of each point of the last hypothesis.
 
\medskip  \noindent - The set $A^N_i$ is a subset of $D^N$ such that if the $i^{th}$ component of an element $(y_1,\cdots,y_n)\in A^N_i$ is near the boundary $\partial D$ (that is if $\phi_D(y_i)\ll 1$), then at least one another component $y_j$ fulfils $h^N(\phi_D(y_j))\ll 1$ and thus $\phi_D(y_j)\ll 1$. This implies that if the $i^{th}$ particle jumps after a hard killing to a position located near the boundary $\partial D$, then, with a probability lowered by $p_0^N$, at least one of the other particles is located near the boundary.

\medskip \noindent - The set $B_{x_1,...,x_n}$ is a subset of $D^N$ such that for any $(y_1,\cdots,y_n)\in B_{x_1,...,x_n}$, the components $y_i$ are respectively farther from the boundary than $x_i$. This means that after a hard killing, each particle jumps to a position farther from the boundary after the jump than before the jump.

\bigskip \noindent Let us now recall the result which will ensure that the interacting particle
systems are well defined, for any $N\geq 2$.

 \begin{theorem}[V. 2011 in~\cite{Villemonais2011}]
 \label{theorem:non-explosion}
   Assume that Hypotheses~\ref{hypothesis:particle-dynamics} and~\ref{hypothesis:jump-measure} are fulfilled, then, for any $N\geq 2$, the number
   of jumps of the $N$-particles system doesn't explode in finite time almost
   surely. Equivalently, we have $\tau^N_{\infty}=+\infty$ almost surely, for all $N\geq 2$.
 \end{theorem}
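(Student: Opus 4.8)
The plan is to reduce everything to the control of the hard killings and then to combine a sharp estimate on how fast the radial motion $\phi_D(X^{i,N})$ can reach $\partial D$ with the two structural properties of the jump measure in Hypothesis~\ref{hypothesis:jump-measure}. First, by Hypothesis~\ref{hypothesis:particle-dynamics}.2 the total soft-killing intensity of the $N$-particle system is bounded by $NA$ at all times, so on any finite horizon $[0,T]$ only finitely many soft killings occur almost surely; between two consecutive soft killings the dynamics involves only hard killings, hence it suffices to prove that a system with hard killings only performs finitely many jumps on $[0,T]$ almost surely. For the radial analysis I would apply It\^o's formula to $t\mapsto\phi_D(X^{i,N}_t)$ on the time set where $X^{i,N}_t\in D^{a_0}$, which is legitimate since $\phi_D$ is $C^2$ with bounded derivatives there (Hypothesis~\ref{hypothesis:particle-dynamics}.1) and $\sigma^N_i,\eta^N_i$ are bounded there (Hypothesis~\ref{hypothesis:particle-dynamics}.2): by \eqref{chapitre4:EqHyThExDriftTerm} the quadratic-variation density of the martingale part equals $f^N_i+g^N_i\in(c_0,C_0)$ and the drift is bounded, and since $f^N_i$ is $C^2$ with bounded derivatives and $|g^N_i|\le k_g\phi_D$, a smooth time/environment-dependent change of radial variable turns $\phi_D(X^{i,N})$, up to a bounded random time change, into a Brownian motion with bounded drift. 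This yields a uniform estimate of the form: conditionally on the past, a particle lying at distance $\delta$ from $\partial D$ reaches $\partial D$ within time $s$ with probability at most $\exp(-c\,\delta^2/s)$, for a constant $c$ depending only on $C_0$ and $a_0$.

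Assume now $\P(\tau^N_\infty\le T)>0$ and work on $\{\tau^N_\infty\le T\}$, on which infinitely many hard jumps occur and the interjump times $s_n=\tau^N_{n+1}-\tau^N_n$ satisfy $\sum_n s_n\le T$. Writing $\mu_n=\min_{1\le j\le N}\phi_D(X^{j,N}_{\tau^N_n})$ for the minimal distance just after the $n$-th jump, the particle killed at $\tau^N_{n+1}$ sits at distance at least $\mu_n$ at time $\tau^N_n$, so the estimate above gives $\P(\tau^N_{n+1}-\tau^N_n\le s\mid\mathcal F_{\tau^N_n})\le N\exp(-c\,\mu_n^2/s)$ for $n$ large (all particles then lying in $D^{a_0}$); integrating in $s$ this forces $\E[\tau^N_{n+1}-\tau^N_n\mid\mathcal F_{\tau^N_n}]\ge c'\mu_n^2$, whence $\sum_n\E[\mu_n^2\,\1_{\{\tau^N_\infty\le T\}}]<\infty$, and in particular $\mu_n\to0$ on $\{\tau^N_\infty\le T\}$. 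Here Hypothesis~\ref{hypothesis:jump-measure}.2 enters crucially: since at every hard killing $\phi_D$ does not decrease on any coordinate, each $\phi_D(X^{j,N})$ behaves, between and across jumps, as a diffusion with only upward jumps, which is what makes $\mu_n$ a meaningful monotone bookkeeping quantity that the jumps cannot artificially deflate.

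To reach a contradiction I would feed $\mu_n\to0$ into Hypothesis~\ref{hypothesis:jump-measure}.1. By the strong Markov property and the uniform lower bound $p^N_0$, a conditional Borel--Cantelli argument shows that almost surely on $\{\tau^N_\infty\le T\}$ infinitely many jumps are ``good'', meaning the re-injected particle lands at distance at least $h^N$ of the distance of another particle; for those jumps $\mu_n\ge h^N(\nu_n)$, where $\nu_n$ is the second smallest particle distance at time $\tau^N_n$ (after reducing to $h^N(r)\le r$, which is harmless). Since $\mu_n\to0$ and $h^N$ is continuous and vanishes only at $0$, this gives $\nu_n\to0$ along the good subsequence; iterating (there are only $N$ particles, and each step of Hypothesis~\ref{hypothesis:jump-measure}.1 produces one more particle forced toward $\partial D$), all $N$ particle distances tend to $0$ as $t\uparrow\tau^N_\infty$. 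The contradiction is then obtained by quantifying this collapse against the \emph{lower} bound $c_0$ on the radial diffusion coefficient: a configuration whose every radial coordinate diffuses at a speed bounded below cannot be driven onto $\partial D$ infinitely often within a finite time while the re-injection distances remain summable, which is exactly the regime forced by the second-moment summability of the previous paragraph together with the closing estimate of the radial analysis. Hence $\P(\tau^N_\infty\le T)=0$ for all $T$, i.e. $\tau^N_\infty=+\infty$ almost surely.

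The step I expect to be the real obstacle is this last one, and more precisely the interplay between the three hypotheses in it. On the one hand, the particles genuinely reach $\partial D$, so one cannot keep $\phi_D$ bounded away from $0$, and a Lyapunov functional sensitive enough to register every hard killing is necessarily incompatible with the supermartingale property (the convexity needed for $u(0^+)=+\infty$ makes the generator of $u\circ\phi_D$ non-integrable in time near $\partial D$, since $\int_{0}z\,u''(z)\,dz$ and $u(0^+)$ are simultaneously finite or infinite). On the other hand, the jump measure couples all the particles, so no single-particle estimate is decisive: one must propagate ``distance $\to0$'' through the whole configuration using only the ``some other particle is close'' guarantee of Hypothesis~\ref{hypothesis:jump-measure}.1, and keep every estimate uniform even though the particles may leave $D^{a_0}$ between jumps, where the coefficients are not controlled. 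Reconciling these --- via the precise $f^N_i+g^N_i$ splitting of Hypothesis~\ref{hypothesis:particle-dynamics}.3 and the $h^N$-spreading of Hypothesis~\ref{hypothesis:jump-measure}.1 --- is the technical core of the argument of~\cite{Villemonais2011}.
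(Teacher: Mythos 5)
First, a point of reference: the paper does not prove Theorem~\ref{theorem:non-explosion} at all --- it is quoted from~\cite{Villemonais2011} --- so your sketch can only be measured against the argument of that reference, whose key tool (the two-particle logarithmic estimate, Proposition~4.1 of~\cite{Villemonais2011}) is on display in the present paper in the proof of Lemma~\ref{chapitre4:lemma:1-tightness-inhomogeneous-case}. Your first half is sound and close in spirit to that argument: the reduction to hard killings via the uniform bound $A$ on $\kappa^N_i$, the It\^o/time-change analysis of $\phi_D(X^{i,N})$ on $D^{a_0}$ giving a hitting-time estimate of Gaussian type, the conclusion $\sum_n\mu_n^2<\infty$ and $\mu_n\to 0$ on $\{\tau^N_\infty\le T\}$ (via the conditional Borel--Cantelli lemma, since $\{\tau^N_\infty\le T\}$ is not $\mathcal F_{\tau^N_n}$-measurable), and the use of Hypothesis~\ref{hypothesis:jump-measure}.2 to guarantee that jumps never decrease the radial coordinates --- all of this can be made rigorous. (One slip: at a ``good'' jump Hypothesis~\ref{hypothesis:jump-measure}.1 gives $\phi_D(y_{i})\geq h^N(\phi_D(y_{j}))$ for the \emph{re-injected} particle $i$ and \emph{some} $j\neq i$; your inequality ``$\mu_n\geq h^N(\nu_n)$'' inverts the roles, and the correct reading is a \emph{lower} bound on the landing distance of the killed particle in terms of the others, or equivalently: if the killed particle lands near $\partial D$, then a second particle is simultaneously near $\partial D$.)

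The genuine gap is the final step, and you say so yourself. ``A configuration whose every radial coordinate diffuses at a speed bounded below cannot be driven onto $\partial D$ infinitely often within a finite time'' is not a theorem --- it is false without the hypotheses: the counterexamples of Bieniek, Burdzy and Pal~\cite{Bieniek2011} exhibit exactly such a collapse with explosion in finite time for uniformly elliptic radial motion. What rules it out here is a quantitative estimate you have not supplied: for a pair $i\neq j$, the process $\Phi_t=-\tfrac12\log\bigl((Y^i_t)^2/\pi^i_t+(Y^j_t)^2/\pi^j_t\bigr)$ built from the decomposition $f^N_i+g^N_i$ of Hypothesis~\ref{hypothesis:particle-dynamics}.3 is, up to a drift of integrable size, a supermartingale, whence
\begin{equation*}
P\Bigl(\exists t\le t_0,\ \sqrt{\phi_D(X^{i,N}_t)^2+\phi_D(X^{j,N}_t)^2}\le a\Bigr)\ \le\ \frac{C\,t_0}{\log(\gamma/a)} .
\end{equation*}
It is this $1/\log(1/a)$ bound on ``two particles simultaneously within $a$ of $\partial D$'', combined with Hypothesis~\ref{hypothesis:jump-measure}.1 (each hard killing with small re-injection distance forces that two-particle event, with probability at least $p^N_0$) and Hypothesis~\ref{hypothesis:jump-measure}.2, that bounds the expected number of hard killings on $[0,T]$ and yields $\tau^N_\infty=+\infty$ a.s. Your route --- propagating ``$\mu_n\to0$'' through the whole configuration and invoking $c_0$ --- never produces such a bound; indeed the iteration ``one more particle forced toward $\partial D$ at each step'' only shows that the configuration collapses, which is consistent with explosion, not contradictory to it. Without the logarithmic supermartingale (or an equivalent Lyapunov functional that is finite but blows up at the corner $\phi_D=0$ for \emph{two} coordinates simultaneously), the proof does not close.
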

 
\medskip \noindent Before turning to the next step of our study,
 let us emphasize that one could consider a more complicated situation
 where the diffusion process ${\cal Z}^{i,N}$ is reflected on $\partial D$ and killed when its local time on the boundary reaches the value
 of an independent exponential random variable. Such processes have
 been studied in deep details in the one-dimensional situation
 $d'_0=1$ (see for instance~\cite{Kolb2010} and references therein) and in the
 multi-dimensional situation (see for instance~\cite{Zhen2010}). The technics
 and calculous used in~\cite{Villemonais2011} would still be valid, as would be the rest of
 the present paper. However, the case under study is complicated
 enough, so that we only consider killing boundaries without reflection.

\section{Uniform tightness for time inhomogeneous particle systems}
\label{section:uniform-tightness-empirical}
In the present section, we consider a family of interacting
particle systems and prove a sufficient criterion for the
uniform tightness of the laws of the empirical distributions of the
particle system, at any time time $t$.

\bi For any $N\geq 2$, let $(\cdot,\mathbb{O}^{(N)},\mathbb{X}^{(N)})$
be a $N$-particles system defined as in the previous section, driven
by the time-inhomogeneous environment-dependent diffusion processes
${\cal Z}^{i,N}$, $i=1,...,N$, with environment state space $E$ and
diffusion state space $D$. We also denote by ${\cal S}^N$ and ${\cal
  H}^N$ the jump measures of the interacting particle system.

\bi For any time $t<\tau^N_{\infty}$, we define the empirical
distribution of the particle system with $N$ particles 
$(\cdot,\mathbb{O}^{(N)},\mathbb{X}^{(N)})$ at time $t$ by
\begin{equation*}
  \mu^N_t=\frac{1}{N}\sum_{i=1}^N \delta_{X^{i,N}_t},
\end{equation*}
which is a probability measure on $D$ (we emphasize that the empirical
distribution doesn't take into account the value of the environment).
We're now able to state our tightness result (which will be used in the next section to prove that the family of conditional
distributions of time-homogeneous environment-dependent diffusion
processes is uniformly tight).

\begin{theorem}
\label{theorem:uniform-tightness}
Assume that Hypotheses \ref{hypothesis:particle-dynamics} and \ref{hypothesis:jump-measure} are
fulfilled and fix $t_0>0$.  Then, for any $\epsilon>0$, there exists $a_{\epsilon}>0$ and $N_{\epsilon}\geq 2$ such that, independently of the initial distribution of the particle systems,
$$
\E\left(\mu^N_T(D^{a_{\epsilon}})\right)\leq \epsilon,\ \forall N\geq N_{\epsilon},\ \forall T\geq t_0.
$$
In particular, for any sequence of deterministic times $(t_N)_{N\geq 2}$ such that $\inf_{N\geq 2}{t_N}>0$, the family of laws of the
random measures $\mu^{N}_{t_N}(dx)$, $N\geq 2$, is uniformly
tight.
\end{theorem}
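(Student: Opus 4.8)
The plan is to control the quantity $\E(\mu^N_T(D^{a}))$ by constructing a nonnegative supermartingale (up to a controlled drift) out of a suitable function of the distance to the boundary, averaged over the particles, and then to exploit the jump mechanism described in Hypothesis~\ref{hypothesis:jump-measure} together with the diffusion estimates of Hypothesis~\ref{hypothesis:particle-dynamics}. The natural candidate is something like $V^N_t=\frac{1}{N}\sum_{i=1}^N v(\phi_D(X^{i,N}_t))$ for a well-chosen concave increasing function $v:\mathbb{R}_+\to\mathbb{R}_+$ with $v(0)=0$ (for instance $v(r)=r^{\alpha}$ or a logarithmic-type function), chosen so that, between jumps, the decomposition~\eqref{chapitre4:EqHyThExDriftTerm} and the bounds on $f^N_i$, $g^N_i$, $\sigma^N_i$, $\eta^N_i$, $\kappa^N_i$ make $t\mapsto V^N_t$ a submartingale with a quantitatively nondegenerate drift pushing the particles away from $\partial D$. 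The key point of the construction is that, because each $f^N_i$ is bounded below by $c_0>0$, the second-order term in It\^o's formula applied to $v\circ\phi_D$ dominates near the boundary when $v$ is chosen suitably concave, so the generator of the diffusion part applied to $V^N$ is bounded below by a positive constant on $D^{a_0}$.

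First I would fix $a_0$ and work inside $D^{a_0}$, writing It\^o's formula for $v(\phi_D(Z^{i,N}_t))$ using that $\phi_D\in C^2$ there with bounded derivatives; the first- and zeroth-order contributions are $O(\phi_D)+O(v'\phi_D)+O(v)$ by the boundedness of the coefficients and the bound $|g^N_i|\le k_g\phi_D$, while the leading second-order term is $\tfrac12 v''(\phi_D)\,[f^N_i+g^N_i]$, which is negative and large for concave $v$; so I need $v$ concave with $v''<0$ and $v'$ blowing up at $0$ fast enough that $-v''(r)\cdot c_0$ beats the $O(v'(r)\,r)$ and $O(v(r))$ terms for small $r$ — this forces a choice like $v(r)=r^\alpha$ with $\alpha\in(0,1)$, for which $-v''(r)r^0\sim r^{\alpha-2}$ dominates $v'(r)r\sim r^{\alpha}$ and $v(r)\sim r^\alpha$ near $0$. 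With such $v$, the diffusion generator of $\frac1N\sum_i v(\phi_D(X^{i,N}_t))$ is $\ge \delta>0$ on the event that all particles are in $D^{a_0}$, for some explicit $\delta$; on the complementary event one uses that $v$ is bounded on $D\setminus D^{a_0}$ and absorbs the contribution into a constant.

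Next I would handle the jumps. By Hypothesis~\ref{hypothesis:jump-measure}(2), at a hard killing every particle moves to a position at least as far from the boundary, so $v\circ\phi_D$ being increasing gives that $V^N$ does not decrease at hard-killing jumps; moreover part~(1) gives that with probability at least $p^N_0$ the jumping particle lands at distance $\ge h^N(\phi_D(\text{some }y_j))$, which, combined with a lower bound on the typical distance of at least one other particle, yields a strictly positive expected increase of $V^N$ at each hard killing; at soft killings one needs an analogous control on ${\cal S}^N$, or—more simply—one uses Theorem~\ref{theorem:non-explosion} to know the number of jumps is a.s.\ finite and bounds the total downward variation. Combining the between-jump drift lower bound $\delta$ with the non-negativity of jump increments, $t\mapsto V^N_t - \delta\,(t\wedge\text{(time spent near boundary)})$ is a (local) submartingale, hence $\E(V^N_T)\ge \E(V^N_0) + \delta\,\E(\text{Lebesgue time in }D^{a_0})\ge \delta\int_0^T \E(\mu^N_s(D^{a_0}))\,ds$ up to correction terms; since $V^N$ is bounded above by $v(\mathrm{diam}(D))$, the left side is bounded uniformly in $N$, which forces $\frac1T\int_0^T \E(\mu^N_s(D^{a_0}))\,ds\to 0$ as $a_0\to0$ — and then a short additional argument (Markov property restarted at time $s$, or a second application of the submartingale estimate on $[t_0/2,T]$) upgrades the time-averaged bound to the pointwise-in-$T$ bound $\E(\mu^N_T(D^{a_\epsilon}))\le\epsilon$ for all $T\ge t_0$, uniformly in the initial condition and for $N\ge N_\epsilon$. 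The uniform tightness of $\mu^N_{t_N}$ for $\inf_N t_N>0$ is then immediate from $D^{a_\epsilon}$ having complement $D\setminus D^{a_\epsilon}$ relatively compact in $D$.

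I expect the main obstacle to be the treatment of the jumps and the uniformity in $N$ simultaneously: the constants $p^N_0$, $h^N$ and $c^N_i$ in the hypotheses are allowed to depend on $N$, so one cannot naively combine the per-jump positive increments with a Gronwall-type argument; the resolution should be to observe that the \emph{sign} of each jump increment (non-negative for hard killings by part~(2), and controlled for soft killings) is what matters for the submartingale property, so the $N$-dependent quantitative gains at jumps can simply be discarded (kept as $\ge 0$), and all the genuine uniform-in-$N$ work is concentrated in the between-jump drift estimate, where the constants $a_0,A,k_g,c_0,C_0$ of Hypothesis~\ref{hypothesis:particle-dynamics} are uniform by assumption. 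A secondary technical point is justifying the local-submartingale manipulations up to $\tau^N_\infty=+\infty$ (Theorem~\ref{theorem:non-explosion}) and the passage from the time-averaged estimate to the pointwise one uniformly in the initial law, which I would do by a localization/optional-stopping argument combined with restarting the system via the Markov property.
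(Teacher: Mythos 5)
There is a genuine gap, and it is structural rather than technical. First, the drift computation goes the wrong way: if $v$ is concave increasing with $v''<0$ and $v'$ blowing up at $0$, then by It\^o's formula the dominant term in the drift of $v(\phi_D(X^{i,N}_t))$ near $\partial D$ is $\tfrac12 v''(\phi_D)\,(f^N_i+g^N_i)\leq \tfrac12 v''(\phi_D)\,c_0$, which is \emph{negative} and unbounded; so $V^N_t=\frac1N\sum_i v(\phi_D(X^{i,N}_t))$ is of supermartingale, not submartingale, type near the boundary, and your claim that its generator is $\geq\delta>0$ there is inconsistent with your own choice of $v$. If you flip to the correct supermartingale direction to extract an occupation-time bound for $D^a$ (using that the drift blows up like $-c\,\phi_D^{\alpha-2}$), then the jump increments, which are non-negative by Hypothesis~\ref{hypothesis:jump-measure}(2), now enter with the \emph{unfavorable} sign: you must bound the total upward jump contribution from above, and you cannot simply discard it. In the generality of Hypothesis~\ref{hypothesis:jump-measure} all $N$ particles may move at each killing, so a single jump can increase $V^N$ by $O(1)$ (not $O(1/N)$), and Theorem~\ref{theorem:non-explosion} only gives almost sure finiteness of the number of jumps, with no expectation bound uniform in $N$, in the initial law, or in the jump measures. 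If instead you take $v$ convex so that jumps can be discarded, the drift near $\partial D$ is merely bounded below by a constant, and the resulting occupation bound for the fixed neighbourhood $D^{a_0}$ is vacuous. Finally, even granted a time-averaged estimate $\frac1T\int_0^T\E(\mu^N_s(D^a))\,ds\leq\epsilon$, the upgrade to the fixed-time bound $\E(\mu^N_T(D^{a_\epsilon}))\leq\epsilon$, uniformly in the initial distribution, is not a routine Markov-property remark: smallness of a time average does not control the value at a prescribed time without an additional one-sided regularity or maximal argument, which your sketch does not supply.

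The deeper reason the approach cannot work as proposed is that your functional $V^N$ is essentially insensitive to $N$, whereas the theorem is only claimed (and, under Hypothesis~\ref{hypothesis:jump-measure}, only true) for $N\geq N_\epsilon$: for small $N$ the jump measure is allowed to re-inject a hard-killed particle arbitrarily close to $\partial D$ whenever another particle is close, so a fixed positive fraction of the mass of $\mu^N_T$ can sit in $D^a$ with non-negligible probability. Any argument that would prove the bound uniformly in $N\geq2$ must therefore break down somewhere, and in your scheme it breaks exactly at the jump term. The paper's proof locates the role of large $N$ differently: it bounds $\E(\mu^N_{t_0}(D^a))^2$ by $\frac1N\E(\mu^N_{t_0}(D^a))$ plus $\max_{i\neq j}\P\bigl(\phi_D(X^{i,N}_{t_0})\leq a \text{ and } \phi_D(X^{j,N}_{t_0})\leq a\bigr)$ (a second-moment decomposition), and then controls the \emph{pairwise} probability by a genuinely two-particle estimate: the planar process $\bigl(\phi_D(X^{i,N}),\phi_D(X^{j,N})\bigr)$ has only positive jumps by Hypothesis~\ref{hypothesis:jump-measure}(2), and a logarithmic (radial) estimate \`a la two-dimensional Brownian motion gives Lemma~\ref{chapitre4:lemma:1-tightness-inhomogeneous-case}, namely a bound of order $Ct_0/\log\bigl(\frac{\gamma}{a}\sqrt{c_0/C_0}\bigr)$ after the time $S^{i,N}_\gamma$ at which particle $i$ has reached distance $\gamma$; Lemma~\ref{chapitre4:lemma:bound-on-S-gamma} (a coupling with a time-changed reflected Brownian motion with drift) makes $S^{i,N}_\gamma\leq t_0$ likely uniformly in the initial law, and the Markov property extends the bound from $T=t_0$ to all $T\geq t_0$. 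Your proposal contains no substitute for this two-particle ingredient, which is precisely where Hypothesis~\ref{hypothesis:jump-measure} and the largeness of $N$ interact; to repair the proof you would need either such a pairwise estimate or a uniform-in-$N$ control of the expected number and size of jumps, neither of which is available under the stated hypotheses.
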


\medskip \noindent We emphasize that it is not required for the laws of the initial empirical distributions $(\mu^N_0)_{N\geq 2}$ to be uniformly tight.

\medskip \noindent We also recall that the uniform tightness of the laws of the empirical distributions is of particular interest since it implies that the given sequence is weakly sequentially compact, as a sequence of random probability measures.

\begin{proof}[Proof of Theorem~\ref{theorem:uniform-tightness}]
  By Theorem~\ref{theorem:non-explosion}, the 
  particle system
  $(\cdot,\mathbb{O}^{(N)}_{\cdot},\mathbb{X}^{(N)}_{\cdot})$ is well
  defined at any time for any $N\geq 2$.
  
  \bigskip \noindent The proof of the uniform tightness is based on the following criterion, due to Jakubowski in~\cite{Jakubowski1988}: if, for any $\epsilon>0$, there exists a compact subset $K_{\epsilon}$ of $D$ such that $\mathbb{E}(\mu^N_{t_N}(K_{\epsilon}))\geq 1-\epsilon$ for all $N\geq 2$, then the family of laws of the probability measures $\mu^N_{t_N}$ is uniformly tight. 
Taking $t_0=\inf_{N\geq 2} t_N>0$, the first part of Theorem~\ref{theorem:uniform-tightness} and Jakubowski's criterion clearly implies the second part of Theorem~\ref{theorem:uniform-tightness}. Thus it remains us to prove the first part of the result.

  \bigskip \noindent Fix $\epsilon>0$ and $t_0>0$. In a first
  time, we assume that the killing rate $\kappa_i^{N}$ is equal to $0$, for all
  $N\geq 2$ and $i\in\{1,...,N\}$.  For any $N\geq 2$ and any
  $a>0$, we have
  \begin{equation*}
    \begin{split}
      E\left( \mu^{N}_{t_0}(D^{a}) \right)^2& \leq E\left( \mu^{N}_{t_0}(D^{a})^2
      \right),\\
       & \leq E\left(
      \frac{1}{N^2}\sum_{i=1}^N\left(\delta_{X^{i,N}_{t_0}}(D^{a})\right)^2+\frac{1}{N^2}\sum_{1\leq
      i\neq j \leq N}\delta_{X^{i,N}_{t_0}}(D^{a})\delta_{X^{j,N}_{t_0}}(D^{a})
      \right)\\
      & \leq \frac{1}{N}E\left( \mu^{N}_{t_0}(D^{a})\right) + \frac{1}{N^2}\sum_{1\leq
      i\neq j \leq N}E\left(
      \delta_{X^{i,N}_{t_0}}(D^{a})\delta_{X^{j,N}_{t_0}}(D^{a})\right)\\
      & \leq \frac{1}{N}E\left( \mu^{N}_{t_0}(D^{a})\right)+\max_{1\leq i\neq j \leq N}
      P\left(\phi_D(X^{i,N}_{t_0})\leq a \text{ and }
      \phi_D(X^{j,N}_{t_0}) \leq a\right).
    \end{split}
  \end{equation*}

  \noindent Fix $N\geq 2$ and $i\neq j\in \{1,...,N\}$. For all
  $\gamma\in[0,\frac{a_0}{2}]$ (where $a_0$ is taken from Hypothesis~\ref{hypothesis:particle-dynamics}), we define the stopping time
  \begin{equation*}
    S^{i,N}_{\gamma}=\inf\{t\geq 0,\ \phi_D(X^{i,N}_t)\geq \gamma\},
  \end{equation*}
  which is the first time at which the distance between the $i^{th}$ particle and the boundary $\partial D$ is greater than $\gamma$.
  Let us now state the following useful Lemma, whose proof  
  is
  postponed to the end of this section.
  \begin{lemme}
    \label{chapitre4:lemma:1-tightness-inhomogeneous-case}
    There exists a positive constant $C>0$, independent of $i,j,N$ and
    $\gamma$, such that for all $a\in[0,\gamma\sqrt{\frac{c_0}{C_0}}[$ (where $c_0$ and $C_0$ are taken from Hypothesis~\ref{hypothesis:particle-dynamics}),
    \begin{equation*}
      P\left(\exists t\in [S^{i,N}_{\gamma},t_0],\,
      \sqrt{\phi_{D}(X^{i,N}_{t})^2+\phi_{D}(X^{j,N}_{t})^2}\leq a
      \right)\\
      \leq
      \frac{Ct_0}{\log \left(\frac{\gamma}{a}\sqrt{\frac{c_0}{C_0}}\right)}.
  \end{equation*}
  \end{lemme}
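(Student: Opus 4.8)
The plan is to reduce the statement to a one-dimensional hitting estimate for a Bessel-two-like radial process, and then to bound, uniformly over all admissible coefficients, the probability that such a process started above height $\gamma$ ever drops below height $a$ before time $t_0$.

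Concretely, I would fix $S=S^{i,N}_\gamma$, consider the $\R^2$-valued semimartingale $Y_t=\big(\phi_D(X^{i,N}_t),\phi_D(X^{j,N}_t)\big)$, $t\ge S$, and set $R_t=\|Y_t\|_2$; then $R_S\ge\gamma$ and the event of the Lemma is $\{\exists\,t\in[S,t_0]:\ R_t\le a\}$. The region $\{R_t<a_0\}$ is contained in $\{X^{i,N}_t,X^{j,N}_t\in D^{a_0}\}$, so on it It\^o's formula applies to $\phi_D$ (Hypothesis~\ref{hypothesis:particle-dynamics}.1): between jumps $dY_t=dM_t+b_t\,dt$, where the two coordinates of the continuous local martingale $M$ are orthogonal (independence of $B^{i,N}$ and $B^{j,N}$), have quadratic variation rates $f^N_i+g^N_i$ and $f^N_j+g^N_j$ --- hence both in $(c_0,C_0)$ by Hypothesis~\ref{hypothesis:particle-dynamics}.3(c) --- and $\|b_t\|$ is uniformly bounded (Hypothesis~\ref{hypothesis:particle-dynamics}.2). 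In the step of the main proof where the Lemma is used one has $\kappa^N_i\equiv0$, so every jump is a hard killing, and Hypothesis~\ref{hypothesis:jump-measure}.2 forbids any coordinate of $Y$ to decrease at a jump; hence $R_t$ does not decrease at jumps, and the jumps can only help.

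Thus, up to the first exit time of $\{a<R_t<a_0\}$, the process $R_t$ is the radial part of a uniformly elliptic two-dimensional process (diffusion matrix with eigenvalues in $[c_0,C_0]$), with bounded drift and non-negative jumps: a Bessel-two process. The main point is the associated hitting estimate, which I would establish in two ingredients. (i) A single-passage bound: comparing $R$, on an excursion inside $\{R<a_0\}$, with a planar Brownian motion --- being pessimistic about the ellipticity constants replaces the relevant starting radius $\gamma$ by $\gamma\sqrt{c_0/C_0}$, which is exactly where the factor $\sqrt{c_0/C_0}$ and the restriction $a<\gamma\sqrt{c_0/C_0}$ come in --- and using that $x\mapsto-\log\|x\|_2$ is harmonic in dimension two, so that for planar Brownian motion $\P_r(\tau_a<\tau_b)=\log(b/r)/\log(b/a)$ for radii $0<a<r<b$ ($\tau_\rho$ being the hitting time of radius $\rho$); with $b$ of order $a_0$ this is $O\!\big(1/\log(\tfrac{\gamma}{a}\sqrt{c_0/C_0})\big)$. (ii) A renewal/occupation-time bound: by the strong Markov property decompose $[S,t_0]$ into the successive passages of $R$ between a fixed level $b_0\in(a_0/2,a_0)$ and the level $\gamma$; each completed passage costs an expected time bounded below by a positive constant that depends only on $a_0,c_0,C_0$ and the drift bound (the path must cross the gap $b_0-\gamma\ge a_0/4$ with bounded diffusivity and bounded drift), so the number of passages begun before $t_0$ has expectation $O(t_0)$, and a union bound over them --- each reaching $a$ with conditional probability at most the single-passage bound --- gives the estimate. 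Outside $\{R<a_0\}$ one has $R>a_0>a$, so nothing happens there, which is why the decomposition only has to track re-entries of $\{R<a_0\}$. Feeding this back through the reduction and discarding the jumps (which push $R$ upward) proves the Lemma.

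I expect the main obstacle to be the \emph{uniformity} of the one-dimensional estimate over the whole family of admissible coefficients, together with the fact that $R_t$ is not a Markov process (its coefficients depend on time and on the environment). The comparison with planar Brownian motion in (i) cannot be quoted as a black box: it has to be realised through an explicit test function --- a logarithm truncated at the two radii, $\rho\mapsto\log(b_0/\rho)\wedge\log(b_0/a)$, which is concave in $\log\rho$ --- whose composition with $R$ is a supermartingale up to the exit time of $\{a<R<b_0\}$ \emph{only} because all the coefficients are controlled through $c_0$ and $C_0$ alone; extracting precisely the constant $\sqrt{c_0/C_0}$ (equivalently, recovering the logarithmic scale) from the anisotropy and from the lower-order drift and jump terms is the delicate part, and making the final constant genuinely independent of $\gamma$ requires care in the way the passages in (ii) are counted and combined with (i).
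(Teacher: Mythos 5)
Your reduction (the radial process $R$, the logarithmic scale, the fact that jumps only push the coordinates up, the $\sqrt{c_0/C_0}$ anisotropy correction, excursions confined to $D^{a_0}$ so that It\^o's formula applies to $\phi_D$) matches the paper's setup, but the quantitative core of your argument is different from the paper's and, as written, has a genuine gap. Your single-passage estimate is the \emph{elliptic} one, $\P_r\left(\tau_a<\tau_b\right)=\log(b/r)/\log(b/a)$, applied from the starting radius $r=\gamma\sqrt{c_0/C_0}$ with $b\asymp a_0$. This quantity equals $\log(b/r)/\bigl(\log(b/r)+\log(r/a)\bigr)$, whose numerator $\log\bigl(a_0/(\gamma\sqrt{c_0/C_0})\bigr)$ is unbounded as $\gamma\to 0$: it is \emph{not} $O\bigl(1/\log(\tfrac{\gamma}{a}\sqrt{c_0/C_0})\bigr)$ with a constant independent of $\gamma$ (take $a\asymp\gamma^2$: the elliptic ratio stays of order $1/2$ while the claimed bound tends to $0$). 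Since in your decomposition every dangerous sub-excursion starts at the downcrossing level $\gamma$, multiplying by the expected number of passages --- which your step (ii) does bound correctly by $O(t_0)$ via the lower bound on downcrossing durations and a Wald argument --- yields at best a constant proportional to $\log(a_0/\gamma)$. That would suffice for the way the Lemma is used in the proof of Theorem~\ref{theorem:uniform-tightness} (where $\gamma_\epsilon$ is fixed before $a_\epsilon$), but it does not prove the Lemma as stated. The paper's mechanism is different in kind: each excursion $[s_{2n},s_{2n+1}[$ below level $a_0$ is charged a probability proportional to $E(s_{2n+1}-s_{2n})\big/\log\bigl(\tfrac{\gamma}{a}\sqrt{c_0/C_0}\bigr)$ --- an occupation-time (parabolic) bound, not a hitting-probability (elliptic) one --- obtained from Proposition~4.1 of \cite{Villemonais2011} applied to $\Phi_t=-\tfrac12\log\bigl((Y^i_t)^2/\pi^i_t+(Y^j_t)^2/\pi^j_t\bigr)$; summing the expected durations, which total at most $t_0$, then produces the factor $Ct_0$ with $C$ free of $\gamma$, and no lower bound on excursion durations is ever needed.

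A second, related weakness is your supermartingale step. For a diagonal diffusion matrix with entries $f^N_i+g^N_i$ and $f^N_j+g^N_j$ merely pinned in $(c_0,C_0)$, the second-order term in It\^o's formula for $-\log\|x\|$ does not cancel: it contributes a drift of order $(C_0-c_0)/R^2$, unbounded and of indeterminate sign near $R=0$, which cannot be absorbed into the bounded first-order drift; rescaling the starting radius by $\sqrt{c_0/C_0}$ a posteriori is not a substitute for making the composed process an actual supermartingale. The paper resolves exactly this by building the quadratic-variation rates into the test function (each coordinate normalized by its own $\pi^k=f^N_k$), which cancels the leading term identically, at the price of controlling the It\^o decompositions of $\pi^i,\pi^j$ (the processes $N^i$, $L^i$, $\xi^i$, via point 3(a) of Hypothesis~\ref{hypothesis:particle-dynamics}). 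Your sketch never invokes the $C^1/C^2$ regularity of $f^N_i$, which is a reliable sign that the step where that regularity is indispensable --- turning the logarithm of the radial process into a controlled semimartingale --- is the one that is missing.
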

  \noindent We immediately deduce from
  Lemma~\ref{chapitre4:lemma:1-tightness-inhomogeneous-case} that, for any $\gamma<a_0/2$ and any
  $a\in[0,\gamma \sqrt{\frac{c_0}{C_0}}[$,
  \begin{equation*}
    P\left(\phi_D(X^{i,N}_{t_0})\leq \frac{a}{\sqrt{2}} \text{ and }
    \phi_D(X^{j,N}_{t_0}) \leq \frac{a}{\sqrt{2}}\right)\leq
    \frac{C{t_0}}{\log \left(\frac{\gamma}{a}\sqrt{\frac{c_0}{C_0}}\right)}+P(S^{i,N}_{\gamma}> {t_0}).
  \end{equation*}
  In particular, replacing $a$ by $a\sqrt{2}$, we deduce that, for any $\gamma<a_0/2$ and any
  $a\in[0,\gamma \sqrt{\frac{c_0}{2C_0}}[$,
  \begin{equation}
    \label{chapitre4:equation:bound-expectation-gamma-dependent}
    E\left( \mu^N_{t_0}(D^{a}) \right)^2 \leq \frac{1}{N}E\left(
    \mu^N_{t_0}(D^{a})\right)+\frac{C{t_0}}{\log
      \left(\frac{\gamma}{a}\sqrt{\frac{c_0}{2C_0}}\right)}+\max_{1\leq i \leq N}
    P\left(S^{i,N}_{\gamma}> {t_0}\right).
  \end{equation}
  The proof of the following lemma is also postponed to the end of this section.
  \begin{lemme}
    \label{chapitre4:lemma:bound-on-S-gamma}
    For any $\epsilon>0$, there exists a constant
    $\gamma_{\epsilon}>0$ such that, for all $N\geq 2$ and all
    $i\in\{1,...,N\}$, we have
    \begin{equation}
      \label{chapitre4:equation:bound-on-proba-S-gamma-to-prove}
      P\left(S^{i,N}_{\gamma_{\epsilon}}> {t_0}\right)\leq
      \epsilon/3,
    \end{equation}
    independently of the sequence of initial distributions.
  \end{lemme}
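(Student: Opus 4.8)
Here is the route I would take to prove Lemma~\ref{chapitre4:lemma:bound-on-S-gamma}. The plan is to control, for a fixed particle index $i$, the scalar process $r_t:=\phi_D(X^{i,N}_t)$ — the distance of the $i$-th particle to $\partial D$ — on $[0,t_0]$, and to show that it cannot remain below a small level $\gamma$ for the whole interval. Recall that at this stage of the argument the killing rates $\kappa^N_i$ are assumed to vanish, so that the only jumps of the particle system are those produced by a hard killing. The first step is to observe that \emph{every} jump of $r$ is non-negative: at a hard-killing time the system is repositioned according to ${\cal H}^N$ evaluated at a point of $[0,+\infty[\times E^N\times{\cal D}^N_j$ for some $j$, and the second part of Hypothesis~\ref{hypothesis:jump-measure} forces the post-jump distance-to-the-boundary of every particle, in particular of the $i$-th one, to be at least its pre-jump value; when $j=i$ this is automatic, since then $r_{t^{-}}=0$ while the particle is relaunched inside $D$, so $r_t\geq 0=r_{t^{-}}$. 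Next, as long as $t<S:=S^{i,N}_{\gamma}$ and $\gamma<a_0$, the point $X^{i,N}_t$ stays in $D^{\gamma}\subset D^{a_0}$, where $\phi_D$ is $C^2$ with bounded derivatives and $\sigma^N_i,\eta^N_i$ are bounded by $A$; applying It\^{o}'s formula (legitimate because, by Theorem~\ref{theorem:non-explosion}, there are a.s.\ only finitely many jump times in $[0,t_0]$) one obtains, up to time $S$,
\[
r_{t\wedge S}=r_0+\int_0^{t\wedge S}\theta_s\,ds+M_{t\wedge S}+J_{t\wedge S},
\]
where $J$ is the non-decreasing sum of the (non-negative) jumps, $|\theta_s|\leq M_0$ with $M_0$ depending only on $d'$, $A$ and the bounds on the derivatives of $\phi_D$, and $M$ is a continuous local martingale with $d\langle M\rangle_s=(f^N_i+g^N_i)(s,e^{i,N}_s,X^{i,N}_s)\,ds$ by~\eqref{chapitre4:EqHyThExDriftTerm}, hence $c_0\,ds\leq d\langle M\rangle_s\leq C_0\,ds$ by Hypothesis~\ref{hypothesis:particle-dynamics}.

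The key point is that the \emph{squared} process acquires a uniformly positive drift once $\gamma$ is small. Applying It\^{o}'s formula to $r^2$, discarding the term $r_0^2\geq 0$ and the non-negative jump contributions (each jump of $r^2$ being $\Delta(r_{\tau}^2)=\Delta r_{\tau}\,(r_{\tau}+r_{\tau^{-}})\geq 0$), and using $2r_s\theta_s\geq-2\gamma M_0$ on $[0,S[$, one gets, for every $\gamma\leq c_0/(4M_0)$,
\[
r_{t\wedge S}^2\;\geq\;\frac{c_0}{2}\,(t\wedge S)+N_t,\qquad N_t:=\int_0^{t\wedge S}2r_s\,dM_s.
\]
Since $r_s<\gamma$ on $[0,S[$ and $d\langle M\rangle_s\leq C_0\,ds$, the local martingale $N$ is in fact an $L^2$ martingale with $E\langle N\rangle_{t_0}=E\int_0^{t_0\wedge S}4r_s^2\,d\langle M\rangle_s\leq 4C_0t_0\,\gamma^2$. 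On the event $\{S>t_0\}$ one has $t_0\wedge S=t_0$ and $r_{t_0}<\gamma$, whence $N_{t_0}<\gamma^2-\tfrac{c_0}{2}t_0\leq-\tfrac{c_0}{4}t_0$ provided $\gamma^2\leq c_0t_0/4$; Chebyshev's inequality together with $E[N_{t_0}^2]=E\langle N\rangle_{t_0}$ then gives
\[
P\bigl(S^{i,N}_{\gamma}>t_0\bigr)\;\leq\;P\Bigl(|N_{t_0}|\geq\tfrac{c_0t_0}{4}\Bigr)\;\leq\;\frac{16\,E\langle N\rangle_{t_0}}{c_0^2t_0^2}\;\leq\;\frac{64\,C_0}{c_0^2\,t_0}\,\gamma^2 .
\]
Since $a_0,A,c_0,C_0$ and $M_0$ come from Hypothesis~\ref{hypothesis:particle-dynamics} (and the ambient dimension) and do not depend on $i$, on $N$, or on the initial law, it suffices to take $\gamma_{\epsilon}:=\min\{a_0/2,\ c_0/(4M_0),\ \sqrt{c_0t_0/4},\ \sqrt{\epsilon c_0^2t_0/(192C_0)}\,\}$ to obtain~\eqref{chapitre4:equation:bound-on-proba-S-gamma-to-prove}, uniformly in $N\geq 2$, in $i\in\{1,\dots,N\}$ and in the initial distribution.

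The only genuinely delicate step is the first one — that no jump can bring the $i$-th particle closer to $\partial D$ — which is precisely where the second part of Hypothesis~\ref{hypothesis:jump-measure} enters, together with the standing reduction to $\kappa^N_i\equiv 0$ (which disposes of the otherwise unconstrained repositioning produced by ${\cal S}^N$); everything else is a routine combination of It\^{o}'s formula with the non-degeneracy of the distance process guaranteed by~\eqref{chapitre4:EqHyThExDriftTerm} and Hypothesis~\ref{hypothesis:particle-dynamics}. It is perhaps worth noting that the estimate is quantitative, of order $\gamma^2$, which fits the heuristic that near $\partial D$ the process $r_t$ behaves like a non-degenerate one-dimensional diffusion reflected at $0$, whose expected time to reach level $\gamma$ is $O(\gamma^2)$.
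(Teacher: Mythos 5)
Your argument is correct, but it is genuinely different from the one in the paper. The paper proves the lemma by a coupling: it glues the martingale part of $\phi_D(X^{i,N})$ (near the boundary) with an independent Brownian motion (away from it), invokes Knight's theorem to write the result as a time-changed Brownian motion $W_{\langle M\rangle_t}$ with $\partial_t\langle M\rangle_t\in[c_0\wedge1,C_0\vee1]$, and dominates $\phi_D(X^{i,N}_t)$ from below by $U_{\langle M\rangle_t}$, where $U$ is a doubly reflected Brownian motion with a constant negative drift; the conclusion then follows from the qualitative fact that this fixed process $U$ exceeds any level $\gamma_\epsilon$ before time $T/(c_0\wedge1)$ with probability at least $1-\epsilon/3$ once $\gamma_\epsilon$ is small. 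You instead stop everything at $S^{i,N}_\gamma$ (which keeps the particle in $D^{a_0}$ and dispenses with the excursion decomposition via the times $\theta_n$), apply It\^o to $\phi_D(X^{i,N})^2$, and run a second-moment/Chebyshev argument on the stochastic integral $\int 2r_s\,dM_s$, whose bracket is $O(\gamma^2)$ while the ellipticity lower bound $c_0$ forces a drift of order $c_0t_0/2$ on $\{S>t_0\}$. Both proofs rest on the same two ingredients --- the non-negativity of all jumps of $\phi_D(X^{i,N})$ supplied by the second part of Hypothesis~\ref{hypothesis:jump-measure} (plus the reduction to $\kappa_i^N\equiv0$, which is indeed the regime in which the lemma is invoked), and the two-sided bound $c_0\le f^N_i+g^N_i\le C_0$ --- but yours is more elementary (no Knight theorem, no comparison theorem for reflected SDEs) and buys an explicit quantitative rate $P(S^{i,N}_\gamma>t_0)\le 64C_0\gamma^2/(c_0^2t_0)$, whereas the paper's only yields existence of $\gamma_\epsilon$. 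The one technical point you share with the paper and likewise leave implicit is the justification of It\^o's formula up to the hitting time of $\partial D$ (where $\phi_D$ is only assumed $C^2$ on the open neighbourhood $D^{a_0}$); this is handled by the usual localization and does not affect the argument.
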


  \noindent By
  \eqref{chapitre4:equation:bound-expectation-gamma-dependent} and Lemma~\ref{chapitre4:lemma:bound-on-S-gamma}, we
  deduce that, for all $a\in[0,\gamma_{\epsilon}\sqrt{\frac{c_0}{2C_0}}[$,
  \begin{equation*}
      E\left( \mu^N_{t_0}(D^{a}) \right)^2 \leq \frac{1}{N}E\left(
    \mu^N_{t_0}(D^{a})\right)+\frac{C{t_0}}{\log
      \left(\frac{\gamma_{\epsilon}}{a}\sqrt{\frac{c_0}{2C_0}}\right)}+\frac{\epsilon}{3}.
  \end{equation*}
  \noindent Fixing an integer $N_{\epsilon}\geq 2 \vee \frac{3}{\epsilon}$,
  we thus have, for all $N\geq N_{\epsilon}$ and all $a>0$,
   \begin{equation*}
     \frac{1}{N}E\left( \mu^N_{t_0}(D^{a})\right)\leq \frac{\epsilon}{3}.
   \end{equation*}
   Let $a_{\epsilon}>0$ be a positive constant such that $\log
     \left(\frac{\gamma_{\epsilon}}{a_{\epsilon}}\sqrt{\frac{c_0}{2C_0}}\right)\leq
   3(C{t_0}\epsilon)^{-1}$. We then have
   \begin{equation*}
     E\left( \mu^N_{t_0}(D^{a_{\epsilon}}) \right)^2 \leq \epsilon,\ \forall N\geq N_{\epsilon},
   \end{equation*}
   independently of the sequence of initial distributions. We deduce that the first part of Theorem~\ref{theorem:uniform-tightness} is fulfilled for $T=t_0$.
   
   \bi Fix $T>{t_0}$. Since the previous inequality doesn't depend
   on the distribution of the initial position
   $(X^{1,N}_0,...,X^{N,N}_0)$, it can be applied to the process
   initially distributed with the same distribution as
   $(X^{1,N}_{T-{t_0}},...,X^{2,N}_{T-{t_0}})$. By the Markov property of the
    particle system, we thus obtain
   \begin{equation*}
     E\left( \mu^N_T(D^{a_{\epsilon}}) \right)^2 \leq \epsilon,\ \forall N\geq N_{\epsilon}.
   \end{equation*}
   This allows us to conclude the proof of the first part of
   Theorem~\ref{theorem:uniform-tightness} when $\kappa_i^{N}=0$
   for all $N\geq N_{\epsilon}$ and $i\in\{1,...,N\}$. 

\bi
  Fix $N\geq 2$ and assume now that $(\kappa_i^{N})_{i\in\{1,...,N\}}$
  isn't equal to $0$. Fix $i,j\in\{1,...,N\}$. For any $\gamma>0$, we
  define the stopping time $S^{i,N}_{\gamma}$ as above. We also denote
  by $\tau^{soft}_{\gamma}$ the first soft killing time of
  $X^{i,N}$ or $X^{j,N}$ after $S^{i,N}_{\gamma}$, that is
  \begin{equation*}
    \tau^{soft}_{\gamma}=\inf\{t\geq
    S^{i,N}_{\gamma},\ X^{i,N}\text{ or }X^{j,N}\text{ is softly
      killed at time }t\}.
  \end{equation*}
\noindent Similarly to Lemma
   \ref{chapitre4:lemma:1-tightness-inhomogeneous-case},
     we have, for any $\gamma<a_0/2$ and any $a\in [0,\gamma \sqrt{\frac{c_0}{C_0}}]$,
  \begin{equation*}
      P\left(\exists t\in [S_{\gamma},\tau^{soft}_{\gamma}\wedge {t_0}[,\,
      \sqrt{\phi_{D}(X^{i,N}_{t})^2+\phi_{D}(X^{j,N}_{t})^2}\leq a
      \right)\\
      \leq
      \frac{C{t_0}}{\log \left(\frac{\gamma}{a}\sqrt{\frac{c_0}{C_0}}\right)}.
  \end{equation*}
  In particular, we deduce that 
  \begin{multline*}
    P\left(\phi_D(X^{i,N}_{t_0})\leq a \text{ and }
    \phi_D(X^{j,N}_{t_0}) \leq a\right)\\
\leq \frac{C}{\log
      \left(\frac{\gamma}{a}\sqrt{\frac{c_0}{2C_0}}\right)}
    + P(S_{\gamma}>{t_0} \text{ and } \tau^{soft}_{\gamma}\geq {t_0}) + P(\tau^{soft}_{\gamma}<{t_0}).
  \end{multline*}
  By Hypothesis~\ref{hypothesis:particle-dynamics}, the killing rates $\kappa_i^{N}$ and
  $\kappa_j^{N}$ are uniformly bounded by a constant
  $A>0$. As a consequence, there exists $T_0>0$ such
  that, for all $t\leq T_0$ and all $\gamma>0$,
  \begin{equation*}
    P(\tau^{soft}_{\gamma}<t)\leq \frac{\epsilon}{4}.
  \end{equation*}
  We emphasize that $T_0$ is chosen so that it only depends on the
  uniform bound $A$ and that we can assume, without loss of generality, that $t_0$ is smaller than $T_0$.  By the
  same arguments as in the proof (postponed below) of
  Lemma~\ref{chapitre4:lemma:bound-on-S-gamma}, we can find $\gamma_{\epsilon}>0$
  such that
  \begin{equation*}
    P(S^{i,N}_{\gamma_{\epsilon}}>t_0 \text{ and } \tau^{soft}_{\gamma_{\epsilon}}\geq t_0)\leq \frac{\epsilon}{4}.
  \end{equation*}
  Finally, choosing $a_{\epsilon}$ small enough, we deduce that
  \begin{equation*}
    P\left(\phi_D(X^{i,N}_{t_0})\leq a_{\epsilon} \text{ and }
    \phi_D(X^{j,N}_{t_0} \leq a_{\epsilon}\right)\leq
    \frac{3\epsilon}{4}.
  \end{equation*}
 Finally, choosing $N_{\epsilon}\geq 2 \vee \frac{4}{\epsilon}$ and proceeding as in the first part of the proof, we deduce that  Theorem~\ref{theorem:uniform-tightness} holds for $T=t_0$ and can be extended by the Markov property to any $T>t_0$.
\end{proof}

\begin{proof}[Proof of Lemma~\ref{chapitre4:lemma:1-tightness-inhomogeneous-case}]
  Fix $\gamma\in[0,a_0/2]$ and let us prove that, for all $a\in[0,\gamma[$,
    \begin{equation*}
      P\left(\exists t\in [S^{i,N}_{\gamma},{t_0}],\,
      \phi_{D}(X^{i,N}_{t})+\phi_{D}(X^{j,N}_{t})\leq a
      \right)\\
      \leq
      \frac{C{t_0}}{\log \left(\frac{\gamma}{a}\right)}.
  \end{equation*}

 \bigskip \noindent Let $(s_n)_{n\geq 0}$ be the sequence of stopping
 times defined by
  \begin{equation*}
    s_0=\inf\{s\in [S^{i,N}_{\gamma},{t_0}],
    \ \sqrt{\phi_D(X^{i,N}_s)^2+\phi_D(X^{j,N}_s)^2}\leq a_0/2\}\wedge {t_0}
  \end{equation*}
  and, for all $n\geq 0$,
  \begin{equation*}
    \begin{split}
      s_{2n+1}&=\inf\{s\in [s_{2n},{t_0}],\ \sqrt{\phi_D(X^{i,N}_s)^2+\phi_D(X^{j,N}_s)^2}\geq a_0\}\wedge {t_0}\\
      s_{2n+2}&=\inf\{s\in [s_{2n+1},{t_0}],
      \ \sqrt{\phi_D(X^{i,N}_s)^2+\phi_D(X^{j,N}_s)^2}\leq a_0/2\}\wedge {t_0}.
    \end{split}
  \end{equation*}
  It is immediate that $s_n$ converges almost surely to ${t_0}$. By
  construction, we have for all $n\geq 0$,
  \begin{equation*}
    \left\lbrace
    \begin{array}{l}
      \phi_D(X^{i,N}_t)< a_0\text{ and }\phi_D(X^{j,N}_t)<a_0,\ \forall t\in[s_{2n},s_{2n+1}[,\\
      \sqrt{\phi_D(X^{i,N}_t)^2+\phi_D(X^{j,N}_t)^2}\geq a_0/2 \text{ otherwise.}
    \end{array}
    \right.
  \end{equation*}
  In particular, , for all $t\in[s_{2n},s_{2n+1}[$, $\phi_D$ is of class
  $C^2$ at $X^{i,N}_t$ and $X^{j,N}_t$ almost
      surely, by the first point of
  Hypothesis~\ref{hypothesis:particle-dynamics}. This will allow us to compute the It\^o's decomposition
      of $\phi_D(X^{i,N})$ and $\phi_D(X^{j,N})$ 
      at any time $t\in[s_{2n},s_{2n+1}[$, using It\^o's formula.
  
  \medskip\noindent
  For all $n\geq 0$, we have
  \begin{equation}
    \label{chapitre4:equation:lemma-useful-1}
    P\left(\exists t\in [s_{2n+1},s_{2n+2}[,\ \sqrt{\phi_D(X^{i,N}_t)^2 + \phi_D(X^{j,N}_t)^2} \leq a\right)= 0,\  \forall a < a_0/2.
  \end{equation}
  
  \medskip\noindent
  Fix $n\geq 0$ and let us now prove that there exists a constant $C>0$ such that
  \begin{equation}
  \label{equation:yi-justification}
    P\left(\exists t\in [s_{2n},s_{2n+1}[,\ \phi_D(X^{i,N}_t)\leq
        a \text{ and } \phi_D(X^{j,N}_t) \leq a\right)
        \leq
        \frac{C}{\log \left(\frac{\gamma}{a}\right)}
        E\left(s_{2n+1}-s_{2n}\right).
  \end{equation}
  We define the positive semi-martingale $Y^i$ by
  \begin{equation}
    \label{chapitre4:EqThExYdecomp}
    Y^i_t=\left\lbrace
    \begin{array}{l}
      \phi_D(X^{i,N}_{s_{2n}+t}) \text{ if } t<s_{2n+1}-s_{2n},\\
      a_0/2+|W^i_t| \text{ if } t\geq s_{2n+1}-s_{2n},
    \end{array}
    \right.
  \end{equation}
  where $W^i$ is a standard one dimensional Brownian motion independent of the rest of the process.
  The extension after time $s_{2n+1}-s_{2n}$ (we recall that $n$ is fixed here)
  allows us to define $Y^i_t$ at any time $t\in[0,+\infty[$. We
  define similarly the semi-martingale $Y^j$. The inequality~\eqref{equation:yi-justification} is proved using \cite[Proposition~4.1]{Villemonais2011} applied to the pair of semi-martingale $Y^i, Y^j$.
  In order to do so, we need the It\^o's
  decompositions of $Y^i$ and $Y^j$. Let us set
  \begin{equation*}
    \pi^i_t=
    \begin{cases}
      f^{N}_i(s_{2n}+t,o^{i,N}_{s_{2n}+t},X^{i,N}_{s_{2n}+t}),&\text{if } 0\leq t<s_{2n+1}-s_{2n},\\
      1,& \text{if } t\geq s_{2n+1}-s_{2n}
    \end{cases}
  \end{equation*}
  and
  \begin{equation*}
    \rho^i_t=
    \begin{cases}
      g^{N}_i({s_{2n}+t},o^{i,N}_{s_{2n}+t},X^{i,N}_{s_{2n}+t}),&\text{if } t<s_{2n+1}-s_{2n},\\
      0,&\text{if } t\geq s_{2n+1}-s_{2n},
    \end{cases}
  \end{equation*}
  where $f^{N}_i$ and $g^{N}_i$ are given by
  Hypothesis~\ref{hypothesis:particle-dynamics}.  By the It\^o's
  formula applied to $Y^i$, we have
  \begin{equation*}
    dY^i_t=dM^i_t+b^i_t dt + dK^i_t + Y^i_t-Y^i_{t\minus},
  \end{equation*}
  where $M^i$ is a local martingale such that
  \begin{equation*}
    d\langle M^i \rangle_t=(\pi^i_t+\rho^i_t)dt;
  \end{equation*}
    $b^i$ is the adapted process given, if $t<s_{2n+1}-s_{2n}$, by
  \begin{multline*}
    b^i_t=
      \sum_{k=1}^{d'i}\frac{\partial \phi_i}{\partial
        x_k}(X^{i,N}_{t_{2n}+t})[\eta^N_i]_k({t_{2n}+t},o^{i,N}_{t_{2n}+t},X^{i,N}_{t_{2n}+t})\\
      +\frac{1}{2}\sum_{k,l=1}^{d_i}\frac{\partial^2 \phi}{\partial
        x_k\partial
        x_l}(X^{i,N}_{t_{2n}+t})[\sigma^N_i\sigma^{N*}_i]_{kl}({t_{2n}+t},o^{i,N}_{t_{2n}+t},X^{i,N}_{t_{2n}+t}),
  \end{multline*}
      and, if $t\geq s_{2n+1}-s_{2n}$, by $b^i_t=0$ ; $K^i$ is a
      non-decreasing process given by the local time of $|W_t|$ at $0$
      after time $s_{2n+1}-s_{2n}$.  By the $3^{th}$ point of
      Hypothesis~\ref{hypothesis:particle-dynamics}, we
      have, for all $t\geq 0$,
  \begin{equation}
    \label{chapitre4:EqThExBornePiRho}
    c_{0}\wedge 1\leq \pi^i_t+\rho^i_t \leq C_{0}\vee 1,\text{ and
    } |\rho^i_t|\leq k_g Y^i_t
  \end{equation}
  By Hypothesis~\ref{hypothesis:particle-dynamics}, $\phi_D$ is of
  class $C^2$ on $D^{a_0}$, with uniformly bounded derivatives, and $\eta^N_i,\sigma^N_i$ are uniformly
  bounded. This implies that there exists $b_{\infty}>0$ (independent of $i$ and $N$) such that,
  for all $t\geq 0$,
  \begin{equation}
     \label{chapitre4:EqThExBorneB}
     b^i_t\geq -b_{\infty}.
  \end{equation}
  Similarly, we get the decomposition of $Y^j$, with $\pi^j$, $\rho^j$
  and $b^j$ fulfilling inequalities \eqref{chapitre4:EqThExBornePiRho} and
  \eqref{chapitre4:EqThExBorneB} (without loss of generality, we keep the same
  constants $c_{0}$, $C_{0}$, $k_g$ and $b_{\infty}$).

\me  Let us now compute the It\^o's decompositions of $\pi^i$ and $\pi^j$.
    We deduce from the It\^o's formula that there exist a local
  martingale $N^i$ and a finite variational process $L^i$ such that, for all $t\geq 0$,
   \begin{equation*}
    d\pi^i_t=dN^i_t+ dL^i_t + \pi^i_t-\pi^i_{t\minus},
  \end{equation*}
   where, for all $t\in [0,s_{2n+1}-s_{2n}[$,
   \begin{equation*}
       L^i_{t}=\int_0^t \left(\sum_{k=1}^{d_0}\frac{\partial f_i^{N}}{\partial
         e^k}(s_{2n}+s,o^i_{s_{2n}+s},X^i_{s_{2n}+s})+\sum_{k=1}^{d'_0}\frac{\partial
         f_i^{N}}{\partial x^k}(s_{2n}+s,o^i_{s_{2n}+s},X^i_{s_{2n}+s})\right) ds.
   \end{equation*}
   By Hypothesis~\ref{hypothesis:particle-dynamics}, the derivatives
   of $f_i^{N}$ are uniformly bounded, so that there exists a
   constant $C_L$ such that
   \begin{equation}
     \label{chapitre4:equation:bound-on-L}
     E\left(|L^i|_{s_{2n+1}-s_{2n}}\right)\leq C_L E\left(s_{2n+1}-s_{2n}\right).
   \end{equation}
   Let us set, for all $t<s_{2n+1}-s_{2n}$,
   \begin{multline*}
     \xi^i_t=\sum_{k=1,l}^{d_i}\frac{\partial f^N_i}{\partial e_k}(t,o^{i,N}_t,X^{i,N}_t)\frac{\partial f^N_i}{\partial e_l}(t,o^{i,N}_t,X^{i,N}_t)
                                            [s^N_i s_i^{N*}]_{kl}(t,o^{i,N}_t,X^{i,N}_t)\\
           +\sum_{k=1,l}^{d'_i}\frac{\partial f^N_i}{\partial x_k}(t,o^{i,N}_t,X^{i,N}_t)\frac{\partial f^N_i}{\partial x_l}(t,o^{i,N}_t,X^{i,N}_t)
                                            [\sigma^N_i \sigma_i^{N*}]_{kl}(t,o^{i,N}_t,X^{i,N}_t)
   \end{multline*}
   and, for all $t\geq s_{2n+1}-s_{2n}$, $\xi^i_t=0$. Then we have
      \begin{equation*}
     \langle N^i\rangle_t= \xi^i_t dt.
   \end{equation*}
   Thanks to the regularity assumptions on $f_i^N$ and the boundedness
   of $s^N_i$ and $\sigma^N_i$, there exists $C_{\xi}>0$ such that
  \begin{equation}
    \label{chapitre4:EqThExBorneXi}
    \xi^i_t \leq C_{\xi}.
  \end{equation}
  The same decomposition and inequalities hold for $\pi^j$, with the
  same constants $C_L$ and $C_{\xi}$. We emphasize that these
  constants are chosen independently of $i$, $j$ and $N$, since the
  bounds that we used are by assumption uniform in $i,j,N$.

  \bigskip\noindent We define the process
  \begin{equation*}
    \Phi_t\stackrel{def}{=}-\frac{1}{2}\log\left(\frac{(Y^i_t)^2}{\pi^1_t}+\frac{(Y^j_t)^2}{\pi^2_t}\right),\ t\geq 0,
  \end{equation*}
  and we set, for all $\epsilon>0$, $T_{\epsilon}=\inf\{t\in[0,T],\ \Phi_t\geq
  {\epsilon}^{-1}\}$. By the previous It\^o's decompositions, one can
  apply \cite[Proposition~4.1]{Villemonais2011} to the pair of semi-martingales $Y^1,Y^2$.
Thus, for any stopping time $\theta$, we have
  \begin{equation*}
    P\left(T_{\epsilon}\leq \theta \right)\leq
    \frac{1}{\epsilon^{-1}-\Phi_0}
    C\left(E(|L^i|_{\theta}+|L^j|_{\theta}) + E(\theta)\right).
  \end{equation*}
  Applying this result to $\theta=s_{2n+1}-s_{2n}$ (which is a
  stopping time for the filtration of the process $(X^{i,N},X^{j,N})$ after
  time $s_{2n}$) and using \eqref{chapitre4:equation:bound-on-L}, we deduce that
  there exists a constant $C'>0$, which only depend on the constants
  $b_{\infty},k_g,c_{0},C_{0},C_{\xi}$, such that
  \begin{equation}
    \label{chapitre4:equation:born-between-t-2n-and-t-2n-1}
    P\left(T_{\epsilon}\in [0,s_{2n+1}-s_{2n}[ \right)\leq
    \frac{1}{\epsilon^{-1}-\Phi_0} C'(2\,C_L+1)E\left(s_{2n+1}-s_{2n}\right).
  \end{equation}
 By Hypothesis~\ref{hypothesis:particle-dynamics}, we have
  \begin{equation*}
    \Phi_0\leq - \log \left(
    \sqrt{\frac{(Y^1_{0})^2}{C_{0}}+
    \frac{(Y^2_{0})^2}{C_{0}}} \right).
  \end{equation*}

\me  If $s_{2n}=T$, then $s_{2n+1}-s_{2n}=0$, so that $Y^1_{0}=Y^2_{0}=a_0$. If $s_{2n}<T$, then we have $s_{2n}=S^{i,N}_{\gamma}$ or $s_{2n}>S^{i,N}_{\gamma}$. If $s_{2n}=S^{i,N}_{\gamma}$, then, by definition of $S^{i,N}_{\gamma}$ and by the right continuity of the process, we have $\phi_D(X^{i,N}_{s_{2n}})\geq \gamma$, that is $Y^i_0\geq \gamma$. If $s_{2n}>S_{\gamma}^{i,N}$, then, by definition of $s_{2n}$,
  we have $$\sqrt{\phi_D(X^{i,N}_{s_{2n}\minus})^2+\phi_D(X^{j,N}_{s_{2n}\minus})^2}\geq a_0/2.$$
Since $\sqrt{\phi_D(X^{i,N})^2+\phi_D(X^{j,N})^2}$ can only have positive jumps, we deduce that
  $$
  \sqrt{\phi_D(X^{i,N}_{s_{2n}})^2+\phi_D(X^{j,N}_{s_{2n}})^2}=a_0/2\geq \gamma,
  $$
that is
  $\sqrt{(Y^1_0)^2+(Y^2_0)^2}\geq a_0/2\geq \gamma$.
  Finally, in all cases, we have
  \begin{equation*}
  \Phi_0\leq -\log \left(\frac{\gamma}{\sqrt{C_{0}}}\right).
  \end{equation*}
  Thus we deduce from~\eqref{chapitre4:equation:born-between-t-2n-and-t-2n-1} that, for $\epsilon>0$ small enough,
  \begin{equation*}
    P\left(T_{\epsilon}\in [0,s_{2n+1}-s_{2n}] \right) \leq
    \frac{1}{\epsilon^{-1}+\log \left(\frac{\gamma}{\sqrt{C_{0}}}\right)}
    C(2\,C_L+1)E\left(s_{2n+1}-s_{2n}\right).
  \end{equation*}
  which implies that
  \begin{multline*}
    P\left(\exists t\in [s_{2n},s_{2n+1}],\,
        \sqrt{\phi_{D}(X^{i,N}_{t})^2+\phi_{D}(X^{j,N}_{t})^2}\leq \sqrt{c_{0}} e^{-\frac{1}{\epsilon}}
          \right)\\
          \leq
          \frac{1}{\epsilon^{-1}+\log \left(\frac{\gamma}{\sqrt{C_{0}}}\right)}
          C(2\,C_L+1)E\left(s_{2n+1}-s_{2n}\right).
  \end{multline*}
  Replacing $\epsilon^{-1}$ by
  $-\log(a/\sqrt{c_{0}})$, we deduce that
  \begin{multline*}
    P\left(\exists t\in [s_{2n},s_{2n+1}],\,
        \sqrt{\phi_{D}(X^{i,N}_{t})^2+\phi_{D}(X^{j,N}_{t})^2}\leq a
          \right)\\
          \leq
          \frac{C(2\,C_L+1)}{\log \left(\frac{\gamma}{a}\sqrt{\frac{c_0}{C_0}}\right)}
          E\left(s_{2n+1}-s_{2n}\right).
  \end{multline*}
  Summing over $n\geq 0$ and using
  equality~\eqref{chapitre4:equation:lemma-useful-1}, we deduce that
  \begin{equation*}
    P\left(\exists t\in [S_{\gamma},t_0],\,
        \sqrt{\phi_{D}(X^{i,N}_{t})^2+\phi_{D}(X^{j,N}_{t})^2}\leq a
          \right)\\
          \leq
          \frac{C(2\,C_L+1)t_0}{\log \left(\frac{\gamma}{a}\sqrt{\frac{c_0}{C_0}}\right)}.
  \end{equation*}
 This immediately leads to Lemma~\ref{chapitre4:lemma:1-tightness-inhomogeneous-case}.
\end{proof}

\begin{proof}[Proof of Lemma~\ref{chapitre4:lemma:bound-on-S-gamma}] 

    In order to prove Lemma~\ref{chapitre4:lemma:bound-on-S-gamma}, we build a coupling
  between $\phi_D(X^{i,N})$ and a time changed reflected Brownian motion with drift.

 \me Let $(\theta_n)_{n\geq 0}$ be the sequence of stopping
  times defined by
  \begin{equation*}
    \theta_0=\inf\{t\in [0,T],\ \phi_D(X^{i,N}_t)\leq a_0/2\}\wedge T
  \end{equation*}
  and, for all $n\geq 0$,
  \begin{equation*}
    \begin{split}
      \theta_{2n+1}&=\inf\{t\in [\theta_{2n},T],\ \phi_D(X^{i,N}_t)\geq a_0\}\wedge T\\
      \theta_{2n+2}&=\inf\{t\in [\theta_{2n+1},T],\ \phi_D(X^{i,N}_t)\leq a_0/2\}\wedge T.
    \end{split}
  \end{equation*}
  It is immediate that $(\theta_n)$ converges almost surely to $T$ and
  that
  \begin{equation*}
    \begin{split}
      \phi_D(X^{i,N}_t)&\geq \frac{a_0}{2},\ \forall
      t\in[0,\theta_0]\ \text{and}\ 
      \forall t\in\cup_{n=0}^{\infty}[\theta_{2n+1},\theta_{2n+2}[\\
      \phi_0(X^{i,N}_t)&< a_0,\ 
      \forall t\in \cup_{n=0}^{\infty}[\theta_{2n},\theta_{2n+1}[.
    \end{split}
  \end{equation*}
  Let $ \Gamma $   be a
  $1$-dimensional Brownian motion independent of the process
  $(.,\mathbb{O}^{(N)},\mathbb{X}^{(N)})$.  We set
  \begin{equation*}
    M_t=\Gamma_t,\ \text{for}\ t\in[0,{\theta}_0[,
  \end{equation*}
  and, for all $n\geq 0$,
  \begin{align*}
    M_t&=M_{{\theta}_{2n}}+\
    \int_{{\theta}_{2n}}^{t}
	{\sum_{k=1}^{d'_0} \frac{\partial \phi_D}{\partial x_k}[\sigma_i]_{kl}(t,o^{i,N}_t,X^{i,N}_t) d[B^i_s]_{l}}
\ \text{for}\
    t\in[{\theta}_{2n},{\theta}_{2n+1}[,\\
	M_t&=M_{{\theta}_{2n+1}}+(\Gamma_t-\Gamma_{{\theta}_{2n+1}})\
	\text{for}\ t\in[{\theta}_{2n+1},{\theta}_{2n+2}[,
  \end{align*}
  Informally, $M$ is a square-integrable martingale which is parallel to the
  martingale part of $\phi_D(X^{i,N})$ when this one is near $0$ (at least strictly smaller than $a_0$), and equal
  to an independent Brownian motion when $\phi_D(X^{i,N})$ is sufficiently far from
  $0$ (at least bigger than $a_0/2$).
  By \cite[Theorem 1.9 (Knight)]{Revuz1999}, $M$ is a time
  changed Brownian motion. More precisely, there exists a
  $1$-dimensional Brownian motion $W$ such
  that, for all $t\geq 0$,
  \begin{equation*}
    M_t=W_{\langle M\rangle_t}.
  \end{equation*}
  By It\^o's formula, we have
  \begin{equation*}
    \frac{\partial \langle M\rangle_t}{\partial t}=\left\lbrace
    \begin{array}{l}
      f_i^N(t,o^{i,N}_t,X^{i,N}_t)+g_i^N(t,o^{i,N}_t,X^{i,N}_t)
        \text{ if }\exists n\geq 0\text{ such that }t\in[\theta_{2n},\theta_{2n+1}[,\\
      1\text{ if }\exists n\geq 0\text{ such that }t\in[\theta_{2n+1},\theta_{2n+2}[.
    \end{array}
    \right.
  \end{equation*}
  By Hypothesis~\ref{hypothesis:particle-dynamics}, we deduce that
  \begin{equation}
    \label{chapitre4:EqThTightnessBornM}
    c_{0}\wedge 1\leq \frac{\partial}{\partial t} \langle
    M\rangle_t\leq C_{0}\vee 1.
  \end{equation}

  \bi By the uniform bounds assumptions of Hypothesis~\ref{hypothesis:particle-dynamics}, there exists a
  positive constant $C_1>0$ such that, for all $t\in[\theta_{2n},\theta_{2n+1}[$,
  \begin{multline}
  \label{equation:bound-below-C1}
    -C_1\leq \frac{1}{2}\sum_{k,l=1}^{d_0}
      \frac{\partial^2 \phi_D}{\partial x_k\partial
      x_l}(X_t)\left[\sigma^{N}_i\left(\sigma^{N}_i\right)^*\right]_{kl}(t,o^{i,N}_t,X^{i,N}_t)\\
      +\sum_{k=1}^{d_0} \frac{\partial \phi_D}{\partial
      x_k}(X^{i,N}_t)
      \left[\eta_i^{N}\right]_k(t,o^{i,N}_t,X^{i,N}_t),
  \end{multline}
  which is the drift part of the semi-martingale $\phi_D(X^{i,N})$.
  Let $U$ be the diffusion process reflected on $0$ and $a$, defined
  by
  \begin{equation*}
    dU_t=dW_t-\frac{C_1}{c_0\wedge 1} dt+dL^{0}_t-dL^{a}_t,\ 
    U_0=0,
  \end{equation*}
  where $L^{0}$ (resp. $L^{a}$) is the local time of
  $U$ on $0$ (resp. $a$). In particular, we have
  \begin{equation*}
    dU_{\langle
      M\rangle_t}=dM_t-\frac{C_1}{c_0\wedge 1}\frac{\partial}{\partial
      t}  \langle M\rangle_t dt 
+ dL^{0}_{\langle M\rangle_t}-dL^{a}_{\langle M\rangle_t},
  \end{equation*}
  where, by the third point of Hypothesis~\ref{hypothesis:particle-dynamics} and inequalities~\eqref{chapitre4:EqThTightnessBornM} and~\eqref{equation:bound-below-C1},
  \begin{multline*}
    -\frac{C_1}{c_{0}\wedge 1}\frac{\partial}{\partial t} \langle
      M\rangle_t\leq \frac{1}{2}\sum_{k,l=1}^{d_0}
      \frac{\partial^2 \phi_D}{\partial x_k\partial
      x_l}(X_t)\left[\sigma^{N}_i\left(\sigma^{N}_i\right)^*\right]_{kl}(t,o^{i,N}_t,X^{i,N}_t)\\
      +\sum_{k=1}^{d_0} \frac{\partial \phi_D}{\partial
      x_k}(X^{i,N}_t)
      \left[\eta_i^{N}\right]_k(t,o^{i,N}_t,X^{i,N}_t),
  \end{multline*}
  which is the drift part of the semi-martingale
  $\phi_D(X^{i,N}_t)$. Informally, $U_{\langle M\rangle_t}$ evolves as
  $\phi_D(X^{i,N}_t)$ but with a stronger drift toward $0$, $U_{\langle M\rangle_t}$ is reflected on $0$
  while $\phi_D(X^{i,N}_t)$ makes positive jumps when it hits $0$ and
  $U_{\langle M\rangle_t}$ is reflected on $a$ while
  $\phi_D(X^{i,N}_t)$ can become greater than $a$. As a consequence
  (see \cite[Proposition~2.2]{Villemonais2010} for a rigorous and detailed argumentation of this fact),
  we have
  \begin{equation*}
     0\leq U_{ \langle M\rangle_t} \leq \phi_D(X^{i,N}_t),\ \forall t\in[0,T].
  \end{equation*}
  Then, for all $\gamma>0$,
  \begin{equation*}
    \left\lbrace \phi_D(X^{i,N}_t) \geq
    \gamma\right\rbrace\supset \left\lbrace U_{\langle
    M\rangle_t} \geq \gamma \right\rbrace,
  \end{equation*}
  where $\langle M\rangle_t\in[\frac{t}{C_0\vee
      1},\frac{t}{c_0\wedge 1}]$ by inequality
  \eqref{chapitre4:EqThTightnessBornM}.
  It yields that
  \begin{equation*}
    \left\lbrace \exists t\in [0,T] | \phi_D(X^{i,N}_t) \geq
    \gamma\right\rbrace\supset \left\lbrace \exists
    t\in[0,\frac{T}{c_{0}\wedge 1}] \text{ such that } U_t\geq
    \gamma,\ \right\rbrace,
  \end{equation*}
  which implies that
  \begin{equation*}
    P(S^{i,N}_{\gamma}\leq T)\geq P\left( \exists
    t\in[0,\frac{T}{c_{0}\wedge 1}] \text{ such that } U_t \geq \gamma
    \right).
  \end{equation*}
  The process $(U_t)_{t\geq 0}$ is a reflected Brownian motion with
  bounded drift, whose law doesn't depend on $i,N$. As a consequence,
  there exists $\gamma_{\epsilon}>0$ independent of $i,N$ such that
  $P\left( \exists t\in[0,\frac{T}{c_{0}\wedge 1}] \text{ such that }
  U_t \geq \gamma_{\epsilon} \right)\geq 1-\epsilon/3$. This allows us to conclude the proof of Lemma~\ref{chapitre4:lemma:bound-on-S-gamma}.
\end{proof}

\section{Uniform tightness for the conditional distribution of time-inhomogeneous diffusions}
\label{section:uniform-tightness-distribution}

In this section, we use Theorem~\ref{theorem:uniform-tightness} in order to prove the uniform
tightness of the family of conditional distributions of
time-inhomogeneous environment-dependent diffusion processes. More
precisely, let $E$ be an open subset of $\R^d$ ($d\geq 0$) and $D$ a
bounded open subset of $\R^{d'}$, with $(d'\geq 1)$. We consider the
diffusion process ${\cal Z}$ with values in $[0,+\infty[\times E\times
    D$ and denoted by ${\cal Z}_t=(t,e_t,Z_t)$ at time $t\geq 0$, which fulfils the stochastic differential system
\begin{equation}
\label{equation:stochastic-system-time-environment}
\begin{split}
  de_t&=s(t,e_t,Z_t)d\beta_t+m(t,e_t,Z_t)dt\\
  dZ_t&=\sigma(t,e_t,Z_t)dB_t+\eta(t,e_t,Z_t)dt.
\end{split}
\end{equation}
Here $(\beta,B)$ is a $d+d'$ standard Brownian motion and $s,m,\sigma,\eta$ are measurable functions.
We also assume that the process is subject to \textit{hard killing} at $\partial D$ and to \textit{soft killing} with rate $\kappa(t,e_t,Z_t)\geq0$, where $\kappa$ is a non-negative measurable function. We denote by $\tau_\d$ the killing time of $\cal Z$, defined by
$$
\tau_\d=\inf\{t\geq 0,\ {\cal Z}_t\text{ is killed at time }t\}.
$$

\bi Our first assumption ensures that the above differential system has a unique solution, and that this solution is strongly Markov.
\begin{hypothesis}
   \label{hypothesis:lipschitz}
   We assume that $s$, $m$, $\sigma$ and
   $\eta$ are continuous uniformly Lipschitz in $e,z$, uniformly in $t$.
   This means that there exists a constant $k_l>0$ such that
   \begin{multline*}
     \|s(t,e,z)-s(t,e',z')\|+\|m(t,e,z)-m(t,e',z')\|\\
     +\|\sigma(t,e,z)-\sigma(t,e',z')\|+|\eta(t,e,z)-\eta(t,e',z')|\leq k_l \left(|z-z'|+|e-e'|\right).
   \end{multline*}
\end{hypothesis}

\me
Under this hypothesis, the system~\eqref{equation:stochastic-system-time-environment} has a solution (see \cite[Theorem 3.10, Chapter 5]{Ethier1986}), which is pathwise unique and Markov up to time $\tau_{\partial}=\inf\{t\geq
0,\ Z_t\notin D\}$ (see \cite[Theorem 3.7, Chapter 5]{Ethier1986}).

\bi Our second assumption ensures that a Fleming-Viot type particle system with particles evolving as $\cal Z$ between the jumps is well defined at any time $t\geq 0$.
\begin{hypothesis}
   \label{hypothesis:ellipticity-regularity}
   We assume that
   \begin{enumerate}
   \item $\phi_D$ is of class $C^2$ on the boundary's neighbourhood $D^{a_0}$, for a given $a_0>0$ (we recall that $D^{a_0}$ is defined above Hypothesis~\ref{hypothesis:particle-dynamics}),
   \item $\kappa$ is uniformly bounded over $[0,+\infty[\times E\times D$ and $s$, $m$, $\sigma$ and $\eta$ are uniformly bounded over $[0,+\infty[\times E\times D^{a_0}$,
     \item there exist two measurable functions $f:[0,+\infty[\times E \times 
         D^{a_0}\rightarrow \mathbb{R}_+$ and $g:[0,+\infty[\times E \times 
             D^{a_0}\rightarrow \mathbb{R}$ such that $\forall
             (t,e,z)\in [0,+\infty[\times E \times D^{a_0}$,
    \begin{equation*}
      \sum_{k,l}\frac{\partial \phi_D}{\partial x_k}(z) \frac{\partial
        \phi_D}{\partial x_l}(z)
      [\sigma\sigma^*]_{kl}(t,e,z)=f(t,e,z)+g(t,e,z),
    \end{equation*}
    and such that
    \begin{enumerate}
    \item $f$ is of class $C^1$ in time and of class $C^2$ in
      environment/space, and the derivatives of $f$ are uniformly
      bounded,
    \item there exists a positive constant $k_g>0$ such that, for all
      $(t,e,z)\in[0,+\infty[ \times E \times D^{a_0}$,
      \begin{equation*}
	|g(t,e,z)|\leq k_g\phi_D(z),
      \end{equation*}
    \end{enumerate}
   \end{enumerate}
 \end{hypothesis}

\bi We are now able to state the main result of this section, which concerns the uniform tightness of the family of conditional distributions of $\cal Z$.

\begin{theorem}
\label{theorem:tightness-conditional-distribution}
Fix $t_0>0$ and assume that Hypotheses~\ref{hypothesis:lipschitz} and~\ref{hypothesis:ellipticity-regularity} hold. Then, for any $\epsilon>0$, there exists $a_{\epsilon}>0$ such that, for any initial distribution $\eta$ of ${\cal Z}$,
\begin{equation*}
\P_{\eta}\left({\cal Z}_{t}=(t,e_{t},Z_{t})\in [0,+\infty[\times E\times D^{a_{\epsilon}}|t<\tau_{\d}\right)\leq \epsilon,\ \forall t\geq t_0.
\end{equation*}
In particular, letting $(\eta_n)_{n\geq 0}$ be any sequence of initial distributions for $\cal Z$ and $(t_n)_{n\geq 0}$ be any sequence of positive times such that $\inf_{n\geq 0} t_n >0$, the family of conditional probability measures on $D$ indexed by $n\geq 0$ and defined by
\begin{equation*}
\P_{\eta_n}\left({\cal Z}_{t_n}=(t_n,e_{t_n},Z_{t_n})\in [0,+\infty[\times E\times \cdot\ |\ t_n<\tau_{\d}\right),\ \forall n\geq 0,
\end{equation*}
is uniformly tight.
\end{theorem}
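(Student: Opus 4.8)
The plan is to realize the conditional distribution of $\mathcal Z$ as a limit of empirical distributions of a sequence of Fleming--Viot particle systems of the type studied in Sections~\ref{section:definition-particle-system}--\ref{section:uniform-tightness-empirical}, and then transfer the uniform tightness proved in Theorem~\ref{theorem:uniform-tightness}. Concretely, for each $N\ge 2$ I would build the $N$-particle system whose particles evolve as independent copies of $\mathcal Z$ between jumps, with soft-killing jump measure $\mathcal S^N$ and hard-killing jump measure $\mathcal H^N$ both of the simplest Fleming--Viot type: when a particle is killed (softly or by hitting $\partial D$) it jumps onto the current position of one of the $N-1$ other particles, chosen uniformly. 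The particles are initialized i.i.d. with law $\eta$ (restricted appropriately to $D$). The first step is to verify that this particle system satisfies Hypotheses~\ref{hypothesis:particle-dynamics} and~\ref{hypothesis:jump-measure}: Hypothesis~\ref{hypothesis:particle-dynamics} follows directly from Hypothesis~\ref{hypothesis:ellipticity-regularity} together with the fact that, since $D$ is bounded and the coefficients are uniformly bounded near $\partial D$, the quantity $f+g=\sum_{k,l}\partial_{x_k}\phi_D\,\partial_{x_l}\phi_D\,[\sigma\sigma^*]_{kl}$ is automatically bounded above; the lower bound $c_0>0$ on $f$ and on $f+g$ must be added as a (nondegeneracy) consequence of uniform ellipticity of $\sigma$ in the normal direction near $\partial D$ — this is where I would need to check the hypotheses line up, possibly strengthening Hypothesis~\ref{hypothesis:ellipticity-regularity} slightly. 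For Hypothesis~\ref{hypothesis:jump-measure}, the Fleming--Viot jump rule "jump onto another particle" trivially satisfies both points: after a hard killing the moved particle lands exactly on $X^{j,N}$, so $\phi_D$ of the $i$-th coordinate equals $\phi_D(X^{j,N})$, giving point~1 with $h^N=\mathrm{id}$ and $p_0^N=1$, and the other coordinates are unchanged so point~2 holds with the identity functions $\phi_i=\phi_D$.

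The second step is to invoke the convergence theorem from~\cite{Villemonais2011}: under Hypotheses~\ref{hypothesis:lipschitz} and~\ref{hypothesis:ellipticity-regularity}, for each fixed $t>0$ the empirical distribution $\mu^N_t$ of this Fleming--Viot system converges (in law, as random measures, hence in expectation when tested against bounded continuous functions) to the conditional law $\P_{\eta}(Z_t\in\cdot\mid t<\tau_\partial)$. In particular, for any fixed $t\ge t_0$ and any bounded continuous $0\le\psi\le 1$,
\begin{equation*}
\E\bigl(\mu^N_t(\psi)\bigr)\xrightarrow[N\to\infty]{}\E_\eta\bigl(\psi(Z_t)\mid t<\tau_\partial\bigr).
\end{equation*}
The third step combines this with Theorem~\ref{theorem:uniform-tightness}: fix $\epsilon>0$, take $a_{\epsilon/2}>0$ and $N_{\epsilon/2}$ from that theorem, so that $\E(\mu^N_t(D^{a_{\epsilon/2}}))\le\epsilon/2$ for all $N\ge N_{\epsilon/2}$ and all $t\ge t_0$, uniformly in the initial law — in particular for the i.i.d.-$\eta$ initialization. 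Choose a bounded continuous function $\psi$ with $\mathbf 1_{D^{a_{\epsilon}}}\le\psi\le\mathbf 1_{D^{a_{\epsilon/2}}}$ for some $a_\epsilon<a_{\epsilon/2}$. Passing to the limit $N\to\infty$ along $N\ge N_{\epsilon/2}$ gives
\begin{equation*}
\P_\eta\bigl(Z_t\in D^{a_\epsilon}\mid t<\tau_\partial\bigr)\le\E_\eta\bigl(\psi(Z_t)\mid t<\tau_\partial\bigr)=\lim_{N\to\infty}\E\bigl(\mu^N_t(\psi)\bigr)\le\limsup_{N\to\infty}\E\bigl(\mu^N_t(D^{a_{\epsilon/2}})\bigr)\le\frac{\epsilon}{2}\le\epsilon,
\end{equation*}
for every $t\ge t_0$, and crucially the bound is independent of $\eta$ because the constants in Theorem~\ref{theorem:uniform-tightness} were. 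The final "in particular" statement then follows from Jakubowski's tightness criterion exactly as in the proof of Theorem~\ref{theorem:uniform-tightness}: for each $\epsilon$, the compact set $K_\epsilon=D\setminus D^{a_\epsilon}\subset D$ satisfies $\P_{\eta_n}(Z_{t_n}\in K_\epsilon\mid t_n<\tau_\partial)\ge 1-\epsilon$ for all $n$, since $t_n\ge t_0:=\inf_n t_n>0$.

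I expect the main obstacle to be the second step — ensuring the convergence result of~\cite{Villemonais2011} applies in exactly the generality needed (time-inhomogeneous, environment-dependent, with soft killing) and produces convergence of the expectations $\E(\mu^N_t(\psi))$ rather than merely convergence in law of $\mu^N_t$; since $\psi$ is bounded these are equivalent by dominated convergence, so the real content is having the weak-convergence statement $\mu^N_t\Rightarrow\P_\eta(Z_t\in\cdot\mid t<\tau_\partial)$ available for fixed $t$. A secondary technical point is checking that the Fleming--Viot particle system genuinely satisfies Hypothesis~\ref{hypothesis:particle-dynamics}, i.e. that the decomposition $f+g$ with the stated lower bound $c_0>0$ is available — this requires the normal-direction ellipticity near $\partial D$, which should be read as part of (or a mild addition to) Hypothesis~\ref{hypothesis:ellipticity-regularity}. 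Everything else is bookkeeping: matching the two sets of hypotheses and chaining the inequalities above.
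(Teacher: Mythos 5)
Your proposal follows essentially the same route as the paper: construct the Fleming--Viot type system with uniform jumps onto the surviving particles, verify Hypotheses~\ref{hypothesis:particle-dynamics} and~\ref{hypothesis:jump-measure} (the paper uses exactly $h^N(u)=u$ and $p_0^N=1$), invoke the convergence theorem of \cite{Villemonais2011} for the empirical measures toward the conditional law, and transfer the uniform bound from Theorem~\ref{theorem:uniform-tightness}. Your two flagged concerns are apt but do not change the architecture: the paper does assert without comment that Hypothesis~\ref{hypothesis:ellipticity-regularity} implies Hypothesis~\ref{hypothesis:particle-dynamics} even though the nondegeneracy bound $c_0<f$ of Hypothesis~\ref{hypothesis:particle-dynamics}(3c) is absent from Hypothesis~\ref{hypothesis:ellipticity-regularity} (so the normal-direction ellipticity must indeed be read as implicit), and your sandwiching of the indicator of $D^{a_\epsilon}$ by a continuous test function is a more careful way of passing to the limit than the paper's direct substitution.
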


\me Informally, Theorem~\ref{theorem:tightness-conditional-distribution} tells us that the conditional distribution of diffusion process doesn't degenerate to the boundary, even if its initial distribution does. We emphasize that the result still applies for random initial distributions.

\begin{proof}[Proof of Theorem~\ref{theorem:tightness-conditional-distribution}]
For any $N\geq 2$, we define the particle system $(t,\mathbb{O}^N_t,\mathbb{X}^N_t)_{t\geq 0}$ as in Section~\ref{section:definition-particle-system} with ${\cal Z}^N={\cal Z}$ and the following values of ${\cal H}^N$ and ${\cal S}^N$, which correspond to the Fleming-Viot type system introduced by Burdzy \textit{et al.}~\cite{Burdzy1996}. Assuming that the $i^{th}$ particle is killed at time $t$, the jump measures are given by
$$
{\cal H}^N(t,\mathbb{O}^N_t,\mathbb{X}^N_t)=\frac{1}{N-1}\sum_{j=1,j\neq i}^N \delta_{(o^j_t,X^j_t)}\text{ for a hard killing}
$$
and by
$$
{\cal S}^N(t,\mathbb{O}^N_t,\mathbb{X}^N_t)=\frac{1}{N-1}\sum_{j=1,j\neq i}^N \delta_{(o^j_t,X^j_t)}\text{ for a soft killing.}
$$
On the one hand, Hypothesis~\ref{hypothesis:ellipticity-regularity}, clearly implies that Hypothesis~\ref{hypothesis:particle-dynamics} is fulfilled.  On the other hand, ${\cal H}^N$ fulfils Hypothesis~\ref{hypothesis:jump-measure} with $h^N(u)=u$ and $p^N_0=1$. Thus, by Theorem~\ref{theorem:non-explosion}, we deduce that the particle system $(\cdot,\mathbb{O}^N,\mathbb{X}^N)$ is well defined at any time, for any $N\geq 2$. As a consequence, one can apply \cite[Theorem~2.1]{Villemonais2011}, which states that this implies, for any $t\geq 0$,
\begin{equation*}
\frac{1}{N}\sum_{i=1}^N\delta_{t,o^{i,N}_t,Z^{i,N}_t}(\cdot)\xrightarrow[N\rightarrow\infty]{}\P_{\eta}\left((t,e_t,Z_t)\in \cdot \times \cdot \times \cdot \right),
\end{equation*}
where the initial distributions $m^N$ of the interacting particle systems are chosen so that
\begin{equation*}
\frac{1}{N}\sum_{i=1}^N\delta_{0,o^{i,N}_0,Z^{i,N}_0}(\cdot)\xrightarrow[N\rightarrow\infty]{} \eta(\cdot).
\end{equation*}
In particular, one has, for any $t\geq 0$,
\begin{equation*}
\mu^N_t(\cdot)=\frac{1}{N}\sum_{i=1}^N\delta_{Z^{i,N}_t}(\cdot)\xrightarrow[N\rightarrow\infty]{}\P_{\eta}\left((t,e_t,Z_t)\in [0,+\infty[\times E\times \cdot \right).
\end{equation*}

\me
By Theorem~\ref{theorem:uniform-tightness}, for any $\epsilon>0$, there exists $a_{\epsilon}>0$ and $N_{\epsilon}\geq 2$ such that
$$
\E\left(\mu^N_t(D^{a_{\epsilon}})\right)\leq \epsilon,\ \forall N\geq N_{\epsilon},\ \forall t\geq t_0,
$$
independently of the initial distribution of the particle system. Thus we have
$$
\P_{\eta}\left((t,e_t,Z_t)\in [0,+\infty[\times E\times D^{a_{\epsilon}} \right)\leq \epsilon,
$$ 
for any initial distribution $\eta$. These concludes the proof of the first part of Theorem~\ref{theorem:tightness-conditional-distribution} and  immediately implies its second part.

\end{proof}

\paragraph{Acknowledgements} This work has been partly written during mly PhD thesis and benefited from the support of the "Chaire Mod\'elisation Math\'ematique et Biodiversit\'e of Veolia Environnement-\'Ecole Polytechnique-Museum National d'Histoire Naturelle-Fondation X". I am extremely grateful to my PhD advisor Sylvie M\'el\'eard for her support and to Steven N. Evans, reviewer of my PhD thesis, for his comments on the first version of this paper.

\end{document}